\documentclass[12pt, a4paper, oneside]{amsart}
\usepackage{amssymb, amsmath, amsthm, verbatim, amsbsy,cite}
\usepackage[english]{babel}
\usepackage{amsaddr}
\newtheorem{theorem}{Theorem}[section]
\newtheorem{cor}[theorem]{Corollary}
\newtheorem{lemma}[theorem]{Lemma}
\newtheorem{prop}[theorem]{Proposition}

\newcommand{\cs}{\operatorname{cs}}

\newcommand{\mZ}{\mathbb{Z}}

\def\ov{\overline}
\def\wh{\widehat}

\begin{document}

\title[Characterization of groups by the order and conjugacy class sizes]{Characterization of the alternating and symmetric groups by the order and conjugacy class sizes}

\author{Ilya Gorshkov}
\address{Sobolev Institute of Mathematics, Novosibirsk, Russia}
\email{ilygor8@gmail.com}

\author{Andrey V. Vasil'ev}
\address{Sobolev Institute of Mathematics, Novosibirsk, Russia}
\email{vasand@math.nsc.ru}

\begin{abstract} We prove that an arbitrary finite group $G$ having the same order and same set of conjugacy class sizes as an alternating or symmetric group $S$ must be isomorphic to~$S$. From this and previously known results it follows that the same holds true for every simple group~$S$.\smallskip

{\bf Keywords:} finite group, symmetric group, conjugacy class size.
\smallskip

{\bf MSC:} 20B10, 20E45, 20D60.
\end{abstract}


\maketitle

\section{Introduction}

Given a finite group $G$, we set $\cs(G)=\{|x^G| : x\in G\}$ for the set of conjugacy class sizes of~$G$, and $\cs^*(G)$ for the same collection of parameters but counting their multiplicities. For example, $\cs(A_5)=\{1,12,15,20\}$ but $\cs^*(A_5)=\{\{1,12,12,15,20\}\}$, as there are two conjugacy classes of elements of order $5$ in the alternating group~$A_5$.

It is quite natural to wonder how the structures of these sets reflect on the structure of a~group. The recent example can be found in \cite{25L2}. There for every special irreducible representation $E$ of a Weyl group, Lusztig defined a finite group $\Delta_E$ satisfying some natural conditions, see also \cite{25L1} for details. To establish the correctness of the definition he had to check that the symmetric groups $S_n$ for $n\leq5$ are uniquely determined by $\cs^*(S_n)$. In fact, as we show below, the same assertion holds true for all positive integers~$n$. Moreover, it remains true under much weaker hypothesis.

\begin{theorem}\label{t:main}
Let $S$ be a finite alternating or symmetric group. If $G$ is a group with $|G|=|S|$ and $\cs(G)=\cs(S)$, then $G\simeq S$.
\end{theorem}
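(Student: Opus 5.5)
The proof splits by $n$. For small $n$ (say $n\le 10$) the statement can be checked directly from the list of groups of order $n!$ or $n!/2$, using $\cs$ together with standard arguments. For large $n$ the aim is to produce a nonabelian composition factor of $G$ isomorphic to $A_n$, and then to use the order to force $G\simeq S$.

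\emph{Step 1: $G$ is nonsolvable with trivial centre, and $G$ has a unique nonabelian composition factor $L$.}
We pick primes $p$ with $n/2<p\le n$; there are many by Bertrand-type results. For such $p$, a Sylow $p$-subgroup of $S$ has order $p$. Elements of order $p$ in $S$ have centralizer order $p\cdot(n-p)!$ (times a factor $1$ or $1/2$), so the corresponding class size is $|S|/(p(n-p)!)$ up to a factor $2$.

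From $\cs(G)=\cs(S)$ and $|G|=|S|$ we argue as follows.
\begin{itemize}
\item The class sizes coprime to $p$ are exactly those of elements whose centralizer contains a Sylow $p$-subgroup.
\item The class sizes of maximal $p$-part pin down centralizers of $p$-elements.
\end{itemize}
The standard tools are Kazarin-type lemmas: if some class size is coprime to $p$ then the element centralizes a Sylow $p$-subgroup, and Camina-type results on class sizes of prime power. We also use the fact that $\cs(G)$ determines $|Z(G)|$ via the minimal class sizes.

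With these we show that $G$ has a section $K/N$ whose order is divisible by all primes in $(n/2,n]$, and that $K/N$ is a direct power of a nonabelian simple group. Using Sylow orders equal to $p$, this section must be a single simple group $L$ whose order is divisible by every prime in $(n/2,n]$ and by $p$ exactly once.

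\emph{Step 2: identify $L$.}
We use the classification together with the known results on simple groups whose order is divisible by all primes in $(n/2,n]$. Such results appear in the work on Thompson's conjecture for $A_n$ by Gorshkov and others, and in Vasil'ev's papers on prime graphs and large primes. They show $L\simeq A_m$ with $m\ge n-3$ or so, or else $L$ belongs to a short exceptional list that is excluded by comparing orders. We then fix $m$ by examining which class sizes of $S$ have $p$-part trivial for the largest $p\le n$. Equivalently, we read off centralizers of $p$-elements: $|C_G(x)|=p\,(n-p)!$ forces enough room only when $m=n$. The resulting bound is $|L|\ge n!/2$, hence $|G:L|\le 2$.

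\emph{Step 3: finish.}
If $|G|=|L|$ then $G\simeq A_n$. If $|G|=2|L|$, then $G$ is either $S_n$ or $A_n\times\mZ_2$, or, for $n=6$, one of the other extensions of $A_6$. The group $A_n\times\mZ_2$ has nontrivial centre, which contradicts $1$ occurring only once as a class size. More simply, the minimal nontrivial class size of $S_n$ is $\binom n2$, attained by transpositions, and it does not occur in $\cs(A_n\times\mZ_2)$. For $n=6$ the extensions $PGL_2(9)$ and $M_{10}$ are separated by direct comparison of $\cs$.

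\emph{Main obstacle.}
The hardest step is Step 1: extracting a nonabelian simple section with large-prime control purely from class sizes, when $G$ a priori could be far from almost simple. I would first try to show that a Sylow $p$-subgroup $P$ for the largest prime $p\le n$ satisfies $C_G(P)=P\times C$ with $|C|$ controlled by the class-size data. Then I would run an induction on the number of such primes, to rule out a solvable radical absorbing them.
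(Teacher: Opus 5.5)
\textbf{Verdict: the proposal has two genuine gaps, and both sit exactly where the hypothesis $|G|=|S|$ has to be used.}

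\textbf{First gap: trivial centre.} Your claim that $\cs(G)$ determines $|Z(G)|$ ``via the minimal class sizes'' is false. For every abelian $A$ we have $\cs(G\times A)=\cs(G)$. Since $\cs(G)$ is a set, ``$1$ occurring only once'' carries no information. This is precisely why Thompson's conjecture assumes $Z(G)=1$. Without it you cannot import the existing machinery on alternating groups, in particular Gorshkov's theorem: a centreless $G$ with $\cs(G)=\cs(V_n)$, $n\ge 23$, has a normal $K$ with $A_m\le G/K\le S_m$ and $m\in[p,n]$.

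The missing idea is that for every prime $r$ dividing $|S|$ there is $x\in S$ with $|x^S|_r=|S|_r$.
\begin{enumerate}
\item For odd $r$, take a permutation with $2$-power centralizer order, built from the binary expansion of $n$ and adjusted to be even.
\item For $r=2$, take an even permutation with odd-order centralizer, such as $(3^1,(t-3)^1)$ with $t\in\{n-1,n\}$ even.
\end{enumerate}
If $r$ divided $|Z(G)|$, every class size of $G$ would have $r$-part strictly below $|G|_r=|S|_r$, a contradiction.

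As written, your Step 1 (``Kazarin-type lemmas'', ``Camina-type results'') gives no actual mechanism for producing the simple section. It is essentially the content of the cited theorem, which you can only invoke once $Z(G)=1$ is proved correctly.

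\textbf{Second gap: passing from $m\in[p,n]$ to $m=n$.} The phrase ``$|C_G(x)|=p(n-p)!$ forces enough room only when $m=n$'' is not an argument. The known unconditional results on Thompson's conjecture for $A_n$ stop at exactly this point, so this phrase hides the whole remaining difficulty. For $x$ of order $p$ you only get $|C_G(x)|=|C_{\overline{G}}(\overline{x})|\cdot|C_K(x)|$, and you have no control over $C_K(x)$.

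The paper's route uses the order hypothesis as follows.
\begin{enumerate}
\item Suppose $m<n$. Since there is a prime in $(4n/5,n]$, we have $n-m\le n/5$, and $|K|=|G|/|\overline{G}|\le 2\,n!/m!$.
\item One shows $C_G(K)\le K$. Otherwise a $p$-element centralizing $K$ would have centralizer order $|C_{\overline{G}}(\overline{x})|\cdot|K|$. This lies strictly between the largest first-type and smallest second-type centralizer orders of $p$-elements in $V_n$ (Lemma~\ref{l:cenp}), so it is not in $|G|/\cs(V_n)$.
\item Hence some Sylow $r$-subgroup $R$ of $K$ has $A_m$ as a section of $N_G(R)/C_G(R)$. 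The bound on sections of $\operatorname{GL}_d(q)$ then forces $|R|\ge r^{m-2}$.
\item Legendre's formula gives $|R|<n\,r^{n/5}$, a contradiction.
\end{enumerate}

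\textbf{Smaller points.}
\begin{enumerate}
\item The small cases cannot be done ``from the list of groups of order $n!$'', since no such list exists for orders like $9!$ or $10!$. The paper instead uses known cases of Thompson's conjecture, plus ad hoc centralizer and Hall-subgroup arguments for $S_9$, $S_{10}$, $S_{15}$, $S_{16}$, $S_{21}$, $S_{22}$.
\item Your alternative argument in Step 3, via $\binom n2\notin\cs(A_n)$, does work. Once $Z(G)=1$ is established properly it is unnecessary.
\end{enumerate}
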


Since the order of a finite group is the sum of the sizes of its conjugacy classes, the following holds.

\begin{cor}\label{c:main}
Let $S$ be a finite alternating or symmetric group. If $G$ is a group with $\cs^*(G)=\cs^*(S)$, then $G\simeq S$.
\end{cor}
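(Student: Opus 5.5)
The plan is to reduce the corollary directly to Theorem~\ref{t:main}, which we take as already established; no new group-theoretic input is needed. We only have to check that the multiset $\cs^*(G)$ determines both hypotheses of the theorem, namely the order $|G|$ and the set $\cs(G)$.

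First, the set $\cs(G)$ is the set of distinct elements of the multiset $\cs^*(G)$, obtained by forgetting multiplicities. So the equality $\cs^*(G)=\cs^*(S)$ gives $\cs(G)=\cs(S)$ at once.

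Second, $G$ is the disjoint union of its conjugacy classes. Hence
$$|G|=\sum_{x^G}|x^G|,$$
where the sum runs over the distinct conjugacy classes of $G$. This is exactly the sum of the elements of $\cs^*(G)$ counted with multiplicity. The same identity holds for $S$, so $\cs^*(G)=\cs^*(S)$ yields $|G|=|S|$. Here it is essential that multiplicities are recorded, since the set $\cs(G)$ alone does not determine this sum.

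With $|G|=|S|$ and $\cs(G)=\cs(S)$ in hand, Theorem~\ref{t:main} applies and gives $G\simeq S$. Once the theorem is available there is no real obstacle: the only point needing care is the class equation above, and the entire difficulty of the result lies in the proof of Theorem~\ref{t:main} itself.
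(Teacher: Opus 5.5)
Your proposal is correct and follows exactly the paper's route: $\cs(G)$ is the underlying set of $\cs^*(G)$, the class equation shows that the multiset $\cs^*(G)$ determines $|G|$, and Theorem~\ref{t:main} then gives $G\simeq S$. The paper justifies the corollary with precisely this one-line observation that $|G|$ is the sum of its conjugacy class sizes.
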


As the reader can see below, our proof of Theorem~\ref{t:main} is relatively short. However it is based on a quite involved technique developed to verify a conjecture attributed to J.~G.~Thompson, see \cite[Problem~12.38]{KT}. The conjecture claims that every finite nonabelian simple group $S$ is uniquely determined by the set $\cs(S)$ in the class of all finite groups with trivial center.

The Thompson conjecture was proved for all simple groups except the alternating groups \cite{19GorS}, while in the case of the alternating groups it was done only modulo the binary Goldbach conjecture \cite{19GorA}. Nevertheless, the main result of \cite{19GorA} (see also Proposition~\ref{p:GorMain} below) guarantees that a centerless finite group $G$ with $\cs(G)=\cs(S)$, where $S$ is an alternating or symmetric group of degree $n\geq23$, has a composition factor isomorphic to~$A_m$, where $m$ belongs to the interval $[p,n]$ and $p$ is the greatest prime not exceeding~$n$.

In order to apply the latter assertion in our case, we prove (see Lemma~\ref{l:ZofG}) that a group $G$ satisfying the equalities $\cs(G)=\cs(S)$ and $|G|=|S|$ for an alternating or symmetric group $S$ of degree $n\geq4$ must have the trivial center. This does not prove Theorem~\ref{t:main} yet, but makes the proof sufficiently straight.

The proof of Lemma~\ref{l:ZofG} is based on the fact that for every prime divisor $r$ of the order of an alternating or symmetric group $S$ there is an element $x\in S$ such that the $r$-parts of $|x^S|$ and $|S|$ are the same (Lemma~\ref{l:G26cor}). The last fact holds true for every other nonabelian simple group $S$ as it follows from the description of adjacency in the prime graph $GK(S)$ (also called the Gruenberg-Kegel graph) in \cite{05VasVd}. Therefore, the equalities $\cs(G)=\cs(S)$ and $|G|=|S|$ yield $Z(G)=1$ for every nonabelian simple group $S$. Adding the validity of the Thompson conjecture for the nonabelian simple groups distinct from the alternating groups and Theorem~\ref{t:main} to these arguments, we obtain the following.

\begin{cor}\label{c:simple1}
Let $S$ be a finite simple group. If $G$ is a group with $\cs(G)=\cs(S)$ and $|G|=|S|$, then $G\simeq S$.
\end{cor}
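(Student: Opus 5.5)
The plan is to reduce Corollary~\ref{c:simple1} to two ingredients: a centre-triviality statement valid for every nonabelian simple group, and the Thompson conjecture, which is known in the needed cases. Let $S$ be a finite simple group and $G$ a group with $\cs(G)=\cs(S)$ and $|G|=|S|$.

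First I dispose of the abelian case. If $S$ is cyclic of prime order, then $\cs(S)=\{1\}$. Hence every class of $G$ is a singleton and $G$ is abelian, and $|G|=|S|$ is prime, so $G\simeq S$. If $S$ is alternating, Theorem~\ref{t:main} applies directly (with $n\geq 5$; the small degrees are covered by the theorem as stated). So assume $S$ is a nonabelian simple group of Lie type or sporadic.

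The key step is to show $Z(G)=1$, following the argument sketched for Lemma~\ref{l:ZofG}. For every prime $r$ dividing $|S|$, I claim there is $x\in S$ with $|x^S|_r=|S|_r$, that is, $|C_S(x)|$ is coprime to $r$. Such an $x$ exists exactly when $r$ is not adjacent in $GK(S)$ to some prime $s$ of $\pi(S)$ (possibly $s=r$ if we allow $r$-regular semisimple elements, or more simply take an element of prime order $s$ whose centralizer is an $r'$-group). This is read off from the adjacency criteria in \cite{05VasVd}. In practice one takes $x$ to generate a suitable cyclic torus or a Sylow subgroup whose centralizer order avoids $r$; for sporadic groups one consults the centralizer orders directly.

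Now suppose $z\in Z(G)$ has prime order $r$. Since $\cs(G)=\cs(S)$, some $g\in G$ has $|g^G|_r=|S|_r=|G|_r$. Then $C_G(g)$ has order coprime to $r$, yet $z\in C_G(g)$, a contradiction. Thus $Z(G)=1$.

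With $Z(G)=1$ and $\cs(G)=\cs(S)$, the validity of the Thompson conjecture for the non-alternating simple groups \cite{19GorS} gives $G\simeq S$.

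The only real obstacle is the uniform verification, across all Lie type families and characteristics, that every prime $r\in\pi(S)$ admits an element with an $r'$-centralizer. I would handle it case by case from the prime graph descriptions in \cite{05VasVd}:
\begin{itemize}
\item For the characteristic prime $r=p$, a regular semisimple element of a maximal torus works.
\item For other $r$, I would use a torus of order coprime to $r$, for instance a unipotent regular element when $r\neq p$ has centralizer a $p$-group times $Z$.
\end{itemize}
In fact the second observation, the regular unipotent element whose centralizer in the simple group is a $p$-group, handles all $r\neq p$ at once and is the cleanest route.
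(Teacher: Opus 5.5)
Your proof is correct and has the same skeleton as the paper's. You show $Z(G)=1$ by exhibiting, for each $r\in\pi(S)$, an element of $S$ whose centralizer has $r'$-order. You then apply Thompson's conjecture for the non-alternating simple groups \cite{19GorS} and Theorem~\ref{t:main} for the alternating ones. You also treat the cyclic case, which the paper leaves implicit.

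The one genuine difference is how you verify the existence claim for groups of Lie type. The paper reads it off from \cite{05VasVd}: every $r$ has a non-neighbour $s\neq r$ in $GK(S)$, so by Cauchy's theorem any element of order $s$ has an $r'$-centralizer. Your preferred route through a regular unipotent element $u$ is also valid and more self-contained. The element $u$ lies in a unique Borel subgroup $B=TU$ of the adjoint group, so $C_{G_{ad}}(u)\leq B$. The action of $T$ on $U/U_{\geq 2}$ then forces the torus part of any centralizing element to be trivial, giving $C_{G_{ad}}(u)\leq U$. Since $S=O^{p'}(G_{ad}^F)$ contains $u$, $C_S(u)$ is a $p$-group, which settles all $r\neq p$ at once. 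A regular semisimple element, whose centralizer is $T^F$ extended by a $p'$-group, settles $r=p$. The paper's route buys brevity given the citation; yours avoids relying on the full adjacency criterion.

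Three minor corrections:
\begin{enumerate}
\item ``Exactly when'' should be ``if''. Only that direction holds and is needed, and the converse fails in general. In $A_{27}$ the prime $2$ is adjacent to every odd prime, yet $(1^1,3^1,23^1)$ has a centralizer of odd order.
\item The parenthetical allowing $s=r$ makes no sense, since an element of order $r$ lies in its own centralizer.
\item Groups not of the form $O^{p'}(G_{ad}^F)$, notably the Tits group ${}^2F_4(2)'$, should be checked directly from \cite{Atlas} together with the sporadic groups.
\end{enumerate}
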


\begin{cor}\label{c:simple2}
Let $S$ be a finite simple group. If $G$ is a group with $\cs^*(G)=\cs^*(S)$, then $G\simeq S$.
\end{cor}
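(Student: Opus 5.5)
The plan is to reduce Corollary~\ref{c:simple2} to Corollary~\ref{c:simple1}, using the observation already made for Corollary~\ref{c:main}. Suppose $\cs^*(G)=\cs^*(S)$. The class equation writes $|G|$ as the sum of the sizes of all conjugacy classes of $G$ counted with multiplicity, that is, as the sum of the multiset $\cs^*(G)$. Hence
\[
|G|=\sum_{k\in\cs^*(G)}k=\sum_{k\in\cs^*(S)}k=|S|.
\]
Forgetting multiplicities, $\cs^*(G)=\cs^*(S)$ also gives $\cs(G)=\cs(S)$. So $G$ satisfies both hypotheses of Corollary~\ref{c:simple1}, and $G\simeq S$ follows.

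We must check that Corollary~\ref{c:simple1} applies to every finite simple group $S$, including the abelian case where the Thompson-conjecture machinery is irrelevant. If $S\simeq\mZ_p$ for a prime $p$, then $\cs^*(S)$ consists of $p$ copies of $1$. So $G$ has $p$ conjugacy classes, each of size $1$. Therefore $G$ is abelian of order $p$, and $G\simeq\mZ_p$. This case could also be handled directly inside Corollary~\ref{c:simple1}: there $|G|=p$ already forces $G\simeq\mZ_p$.

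The nonabelian case contains the only real content, and it is entirely absorbed by Corollary~\ref{c:simple1}. Its justification runs as follows:
\begin{itemize}
\item Lemma~\ref{l:ZofG} and its extension to all nonabelian simple groups, via the prime-graph description in \cite{05VasVd}, give $Z(G)=1$.
\item For non-alternating $S$, the validity of the Thompson conjecture \cite{19GorS} then yields $G\simeq S$.
\item For alternating $S$, Theorem~\ref{t:main} yields $G\simeq S$.
\end{itemize}
Since we are allowed to assume these earlier results, this step poses no obstacle. The only point needing care is that the order is recovered from the multiset, which the class equation supplies immediately.
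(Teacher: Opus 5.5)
Your proposal is correct and follows the paper's argument. The class equation recovers $|G|=|S|$ from $\cs^*(G)=\cs^*(S)$, forgetting multiplicities gives $\cs(G)=\cs(S)$, and Corollary~\ref{c:simple1} then yields $G\simeq S$. Your separate check of the abelian case $S\simeq\mZ_p$ is a harmless detail that the paper leaves implicit.
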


\section{Preliminaries}

The {\em cycle structure} of a permutation $x\in S_n$ is the list of the lengths of its cycles arranged in non-increasing order, i.\,e., the record
\begin{equation}\label{eq:perm}
x=(1^{i_1},2^{i_2},\ldots,n^{i_n})
\end{equation}
means that $x$ has $i_k$ cycles of length $k$ for $k=1,\ldots,n$, and $\sum_{k=1}^n k\,i_k=n$. Sometimes for brevity we do not mention missing cycles in the list, e.\,g., instead of $x=(1^0,\ldots,(n-1)^0,n^1)$ we write simply $x=(n^1)$.

\begin{lemma}\label{l:cenSn}
Let $x=(1^{i_1},2^{i_2},\ldots,n^{i_n})\in S_n$. Then
$$
C=C_{S_n}(x)\simeq \bigotimes_{k=1}^n(\mZ_{k}\wr S_{i_k}),\,\, |C|=\prod_{k=1}^n k^{i_k}\cdot k!\mbox{ and } |x^{S_n}|=n!/|C|,
$$
where $\mZ_{k}\wr S_{i_k}$ is supposed to be the trivial group for each $i_k$ equal to~$0$.
\end{lemma}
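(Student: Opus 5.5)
The plan is to compute the centralizer directly by letting $C$ act on the cycles of~$x$. Note first a typo in the displayed order formula: it should read $|C|=\prod_{k=1}^n k^{i_k}\cdot i_k!$, and this is what I will prove. The last equality $|x^{S_n}|=n!/|C|$ is simply the orbit--stabilizer theorem for the conjugation action of $S_n$ on itself, since the stabilizer of $x$ is $C_{S_n}(x)$. So the real content is the description of $C$ and its order.

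For each $k$, let $\Omega_k$ be the union of the supports of the $k$-cycles of~$x$, with $1$-cycles counted as fixed points. Then $\{1,\ldots,n\}=\bigsqcup_k\Omega_k$ and $|\Omega_k|=k\,i_k$. If $g\in C$, then $g x g^{-1}=x$. Conjugation by $g$ sends each cycle $(a_1\ldots a_k)$ of $x$ to the cycle $(a_1^g\ldots a_k^g)$, which must again be a cycle of~$x$. Hence $g$ permutes the cycles of $x$ of each given length, so $g$ preserves every~$\Omega_k$. It follows that $C=\prod_k C_{\mathrm{Sym}(\Omega_k)}(x_k)$, where $x_k=x|_{\Omega_k}$ is a product of $i_k$ disjoint $k$-cycles. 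This is a direct product over $k$, and the factors with $i_k=0$ are trivial.

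It remains to show that $C_{\mathrm{Sym}(\Omega_k)}(x_k)\simeq \mZ_k\wr S_{i_k}$ and has order $k^{i_k}i_k!$. For this I will use the map $\pi$ sending $g$ to the permutation it induces on the set of $i_k$ cycles. To show $\pi$ is surjective, label the cycles $(a_{j,1}\ldots a_{j,k})$ for $j=1,\ldots,i_k$ and, for $\sigma\in S_{i_k}$, define $g_\sigma\colon a_{j,t}\mapsto a_{\sigma(j),t}$. Each $g_\sigma$ commutes with $x_k$, and $\sigma\mapsto g_\sigma$ is a homomorphic section of~$\pi$. The kernel of $\pi$ consists of the elements preserving each cycle and commuting with it. On a single cycle, the centralizer of a $k$-cycle inside the symmetric group of its support is the cyclic group it generates: such a $g$ is determined by the image of $a_{j,1}$, and commuting forces $g$ to be a power of the cycle. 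So $\ker\pi\simeq\mZ_k^{i_k}$, and $\sigma$ acts on it by permuting coordinates. Therefore the centralizer is $\mZ_k^{i_k}\rtimes S_{i_k}=\mZ_k\wr S_{i_k}$, of order $k^{i_k}i_k!$. The case $k=1$ gives $\mZ_1\wr S_{i_1}=S_{i_1}$, as it should.

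None of these steps is a real obstacle. The only point needing care is verifying that $g_\sigma$ is well defined, commutes with~$x_k$, and gives a genuine complement, so that the extension splits and the group is the wreath product rather than just having the right order.
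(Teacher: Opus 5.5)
Your proof is correct and complete. The paper gives no argument of its own for this lemma; it simply cites \cite[Theorem~1.15]{CB}. Your direct computation is therefore a self-contained version of the standard argument behind that citation: the centralizer permutes the cycles of each length, which splits $C$ into a direct product over $k$, and for fixed $k$ you get the split extension $\mZ_k^{i_k}\rtimes S_{i_k}$ with the explicit complement $\{g_\sigma\}$.

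Working it out directly also exposes a misprint in the statement, and you are right about it. The order is $\prod_k k^{i_k}\, i_k!$, not $\prod_k k^{i_k}\, k!$; the printed version already fails for $x=1$, where the order must be $n!$. The corrected formula is the one the paper actually relies on later. In Lemma~\ref{l:G26cor}, $|C_{S_n}(x)|$ is claimed to be a power of $2$ when $x$ has at most one cycle of each length $2^j$. That holds because each $i_k!$ equals $1$ there, and it would be false with $k!$ in place of $i_k!$.
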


\begin{proof} See, e.g., \cite[Theorem~1.15]{CB}.
\end{proof}

\begin{lemma}\label{l:cenAn}
Let $x=(1^{i_1},2^{i_2},\ldots,n^{i_n})\in A_n$. Then $C_{A_n}(x)=C_{S_n}(x)$ if and only if $i_k=0$ for all even $k$ and $i_k\leq1$ for all odd $k$, where $k=1,\ldots,n$.
\end{lemma}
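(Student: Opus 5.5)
Since $A_n$ has index at most $2$ in $S_n$, we have $C_{A_n}(x)=C_{S_n}(x)\cap A_n$. Hence $C_{A_n}(x)=C_{S_n}(x)$ holds if and only if $C_{S_n}(x)\subseteq A_n$, that is, if and only if $C=C_{S_n}(x)$ contains no odd permutation. So the plan is to describe $C$ explicitly via Lemma~\ref{l:cenSn} and to decide, in terms of the cycle structure of $x$, when $C$ contains an odd element. Recall $C\simeq\prod_k(\mZ_k\wr S_{i_k})$. Here the factor $\mZ_k^{i_k}$ is generated by the individual $k$-cycles of $x$. The complement $S_{i_k}$ is realized by permutations that map the $k$-cycles of $x$ onto one another, preserving the cyclic order of points.

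For the ``only if'' direction we exhibit odd elements of $C$. If $i_k\ge1$ for some even $k$, then a single $k$-cycle of $x$ lies in $C$ (it commutes with $x$), and a cycle of even length is an odd permutation. If $i_k\ge2$ for some odd $k$, take two $k$-cycles $(a_1\,a_2\ldots a_k)$ and $(b_1\,b_2\ldots b_k)$ of $x$. The permutation $\prod_{j=1}^k(a_j\,b_j)$ swaps them compatibly with $x$, so it commutes with $x$. It is a product of $k$ disjoint transpositions with $k$ odd, hence an odd permutation. (For $k=1$ this is just a transposition of two fixed points.)

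For the ``if'' direction, assume $i_k=0$ for all even $k$ and $i_k\le1$ for all odd $k$. Then every factor $S_{i_k}$ is trivial, so $C\simeq\prod_k\mZ_k^{i_k}$ is generated by the cycles of $x$ themselves, of odd length. To see this directly: an element $c\in C$ permutes the cycles of $x$ while preserving their lengths. Since all lengths are distinct, $c$ fixes each cycle setwise. On each cycle, $c$ commutes with the restriction of $x$, which is a $k$-cycle and hence has centralizer in $\mathrm{Sym}$ of its support equal to the cyclic group it generates. So $c$ restricted to that cycle is a power of an odd-length cycle, which is even. Therefore $c\in A_n$, and $C\subseteq A_n$.

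There is no real obstacle here. The only point needing care is justifying the description of $C$ in the ``if'' direction, namely that $c$ must stabilize each cycle and act on it as a power of that cycle. That is the standard content behind Lemma~\ref{l:cenSn}, and we can cite it as such.
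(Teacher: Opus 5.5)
Your proof is correct and is exactly the routine verification the paper leaves to the reader (it only says ``The proof is clear''): you reduce to whether $C_{S_n}(x)$ contains an odd permutation, produce one from an even-length cycle or from two cycles of equal odd length, and otherwise use that $C_{S_n}(x)$ is generated by the odd-length cycles of $x$. The appeal to index at most $2$ is unnecessary, since $C_{A_n}(x)=C_{S_n}(x)\cap A_n$ holds for any subgroup, but it does no harm.
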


\begin{proof} The proof is clear.
\end{proof}

\begin{lemma}\label{l:G26cor} Suppose that $V_n\in\{A_n, S_n\}$ and $n\geq4$. Then for every prime divisor $r$ of $|V_n|$, there is $x\in V_n$ such that the $r$-parts of $|x^{V_n}|$ and $|V_n|$ are the same.
\end{lemma}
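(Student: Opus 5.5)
Since $|x^{V_n}|=|V_n|/|C_{V_n}(x)|$, the condition that the $r$-parts of $|x^{V_n}|$ and $|V_n|$ coincide is the same as $r\nmid |C_{V_n}(x)|$. So for each prime $r\leq n$ we will exhibit a permutation $x\in V_n$ whose centralizer in $V_n$ has order coprime to $r$. Lemma~\ref{l:cenSn} gives the centralizer order in $S_n$, namely $|C_{S_n}(x)|=\prod_k k^{i_k}\,i_k!$, and Lemma~\ref{l:cenAn} handles the passage to $A_n$. We will only use cycle types with all $i_k\leq1$, so $|C_{S_n}(x)|=\prod_{i_k=1}k$ is just the product of the distinct cycle lengths (including a fixed point, which contributes $1$). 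Since $C_{A_n}(x)\leq C_{S_n}(x)$, it suffices in both cases to find $x\in V_n$ with all cycle lengths distinct and none divisible by $r$.

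\emph{Case $V_n=S_n$.} We need to write $n$ as a sum of distinct positive integers, none divisible by $r$. Write $n=qr+s$ with $0\le s<r$ and $q\ge1$.
\begin{itemize}
\item If $s\neq0$, take the cycles of lengths $qr-1$, $1$ and $s$. This needs $qr-1\neq s$ and $s\neq1$, so small cases need care.
\item More cleanly, try to split $n$ into at most three distinct parts coprime to $r$, for instance $n=(n-1)+1$ when $r\nmid n-1$.
\item If $r\mid n-1$, then $r\nmid n$ (as $r\ge2$), so the single cycle of length $n$ works.
\end{itemize}
Indeed, the two options $(n)$ and $(n-1,1)$ already suffice for $S_n$: if $r\mid n$ then $r\nmid n-1$, and $n-1\neq1$ since $n\ge4$.

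\emph{Case $V_n=A_n$.} Now $x$ must be even, and we restrict to odd cycle lengths, which are automatically even permutations. By Lemma~\ref{l:cenAn}, when all cycle lengths are odd and distinct, $C_{A_n}(x)=C_{S_n}(x)$, so again we need $r$ not to divide the product of the lengths. For $r=2$ any such $x$ works, for instance $(n)$ or $(n-1,1)$ according to the parity of $n$. For odd $r$, first let $n$ be odd. Candidates are $(n)$ and $(n-2,1,\ldots)$, but the latter has repeated $1$s. Better candidates are $(n-2,2)$, which is not allowed (even length), and $(n-4,3,1)$ with distinct odd parts. We need to choose among $(n)$, $(n-4,3,1)$, $(n-6,5,1)$, and similar types so as to avoid divisibility by $r$. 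For $n$ even we use $(n-1,1)$, $(n-3,3)$ and $(n-5,5)$, or $(n-3,3)$ when it is valid.

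The main obstacle is this case analysis for $A_n$ with small $r$, especially $r=3$ together with small $n$, where distinctness and oddness collide. For example, $n=6$ and $r=3$: the only types with odd distinct parts are $(5,1)$ and $(3,\ldots)$, and $(5,1)$ works. For $r=3$ and $n$ odd, if $3\mid n$ take $(n-2,\,\ldots)$; $n-2\not\equiv0$, but $2$ must be split into odd distinct parts, which is impossible. So we use $(n-4,3,1)$, which contains $3$ and fails. Instead we use $(n-8,7,1)$, or $(n-10,5,\ldots)$. When $n\equiv 0\pmod 3$ is odd, $(n-6,5,1)$ has $n-6\equiv0$, which fails, while $(n-8,7,1)$ has $n-8\equiv1$, which is good, valid for $n\ge 17$ provided $n-8\notin\{7,1\}$.

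Small $n$ (such as $n=9$ or $15$) will be checked by hand or handled via the weaker route. When cycle lengths repeat, the centralizer in $A_n$ has index at most $2$ in that in $S_n$, which helps only for $r=2$. We will therefore organize the proof as: reduce to finding a partition of $n$ into distinct odd parts coprime to $r$ (in the $A_n$ case), argue generically using a residue argument with parts $n-a$, $a-1$, $1$ for suitable odd $a$, and finish with a finite check of small $n$.
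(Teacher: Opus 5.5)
Your argument for $S_n$ (cycle types $(n)$ and $(n-1,1)$) is correct, and so is your argument for $A_n$ with $r=2$. For $S_n$ it is simpler than the paper's route, and it also avoids a small slip there: for $n=5,7$ the paper's $r=2$ element has type $(1^2,3)$ or $(1^2,5)$, whose $S_n$-centralizer has even order.

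\textbf{The gap.} It lies in the case $V_n=A_n$ with $r$ odd, which you never close. Your reduction to a partition of $n$ into \emph{distinct odd} parts, none divisible by $r$, is sometimes impossible. Your candidate types are:
\begin{itemize}
\item for $n$ odd: $(n)$; $(n-4,3,1)$ for $r\ge5$; $(n-8,7,1)$ for $r=3$;
\item for $n$ even: $(n-1,1)$; $(n-3,3)$ for $r\ge5$; $(n-5,5)$ for $r=3$.
\end{itemize}
These cover every pair $(n,r)$ except precisely $(n,r)\in\{(4,3),(5,5),(6,5),(7,7),(9,3),(10,3),(15,3)\}$. For these seven pairs no admissible partition exists at all. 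For example, the only partitions of $9$ into distinct odd parts are $9$ and $5+3+1$. Those of $15$ are $15$, $11+3+1$, $9+5+1$ and $7+5+3$, and every one contains a multiple of $3$. So checking small $n$ by hand inside your framework must fail, and you flagged only two of the seven cases. Your generic plan with parts $n-a$, $a-1$, $1$ for odd $a$ also contradicts your odd-parts restriction, since $a-1$ is even.

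\textbf{The missing idea.} It is exactly the tool you dismissed: your claim that repeated cycle lengths help only for $r=2$ is wrong. By Lemma~\ref{l:cenSn}, $i_k$ cycles of length $k$ contribute the factor $k^{i_k}\,i_k!$ to $|C_{S_n}(x)|$. This factor is prime to an odd $r$ as soon as $r\nmid k$ and $i_k<r$. Moreover, even-length cycles are allowed in $A_n$ provided there is an even number of them. With this the exceptions are immediate. The elements of types $(2^2)\in A_4$, $(1,2^2)\in A_5$, $(2,4)\in A_6$, $(1,2,4)\in A_7$, $(1,4^2)\in A_9$, $(2,8)\in A_{10}$ and $(2,5,8)\in A_{15}$ have centralizers in $S_n$ of orders $8,8,8,8,32,16,80$, all prime to the relevant $r$.

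The paper exploits this uniformly. It takes one cycle of length $2^j$ for each binary digit $b_j=1$ of $n$, so $C_{S_n}(x)$ is a $2$-group and all odd $r$ are handled at once. If this $x$ is odd, it splits a $2^j$-cycle with $b_{j-1}=0$ into two $2^{j-1}$-cycles, which is a repeated length. If $n=2^{t+1}-1$, it instead fuses the cycles of lengths $1,2$ into a $3$-cycle (for $r\ne3$), or those of lengths $1,4$ into a $5$-cycle (for $r=3$).
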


\begin{proof} Suppose that $n=\sum_{j=0}^{t} 2^{j}\,b_j$ is the binary representation of $n$. Put $x=(1^{a_1},2^{a_2},\ldots,n^{a_n})$, where $a_k=b_j$, if $k=2^j, j=0,\ldots,t$, and $a_k=0$ otherwise. Lemma~\ref{l:cenSn} implies that $|C_{S_n}(x)|$ is a power of $2$, so the $r$-parts of $|x^{S_n}|$ and $|S_n|$ are the same for every odd prime~$r$.

If $x\in A_n$, then $|C_{A_n}(x)|$ divides $|C_{S_n}(x)|$, so it is also a power of $2$. If $x\in S_n\setminus A_n$ but $n\neq\wh{n}=\sum_{j=0}^{t}2^{j}$, then there is $j\in\{1,\ldots,t\}$ such that $b_j=1$ and $b_{j-1}=0$ in the binary representation of $n$. Replacing the cycle $c$ of length $2^j$ in $x$ with the product $\wh{c}$ of two cycles of lengths $2^{j-1}$ such that $c$ and $\wh{c}$ have the same support, we come to the new element $\wh{x}$. It is clear that $\wh{x}\in A_n$ and $|C_{S_n}(\wh{x})|$ is a $2$-power, whence the $r$-parts of $|\wh{x}^{A_n}|$ and $|A_n|$ are the same for every odd prime~$r$. If $n=\wh{n}$, then replacing cycles of lengths $2$ and $1$ in $x$ with the cycle of length $3$, we obtain an even permutation $\wh{x}$ such that the $r$-parts of $|\wh{x}^A_n|$ and $|A_n|$ coincide for each odd prime $r\neq3$. In the same situation, to get an element which centralizer is not a multiple of $3$, it suffices to replace the cycles of lengths $4$ and $1$ in $x$ with the cycle of length~$5$.


In order to find an element $x$ of $V_n$ such that the $2$-parts of $|x^{V_n}|$ and $|V_n|$ are the same, let $t\in\{n-1,n\}$ be even. Then $t\geq4$. Set
$$
x=\begin{cases}
(3^{1},(t-3)^1),\mbox{ if }\gcd(3,t)=1;\\
(5^{1},(t-5)^1),\mbox{ if }3\,|\,t\mbox{ and }\gcd(5,t)=1;\\
(3^1,5^1,(t-9)),\mbox{ if }15\,|\,t=n;\\
(3^1,5^1,(t-7)),\mbox{ if }15\,|\,t=n-1.
\end{cases}
$$
Observe that $x$ is an even permutation. Applying Lemma~\ref{l:cenSn}, it is easy to check that the centralizer of $x$ in $S_n$ has an odd order, so the $2$-part of $|x^{V_n}|$ and $|V_n|$ are the same, and we are done.
\end{proof}

\begin{lemma}\label{l:ZofG}
Let $V_n\in\{A_n, S_n\}$ and $n\geq4$. If $G$ is a group with $|G|=|V_n|$ and $\cs(G)=\cs(V_n)$, then $Z(G)=1$.
\end{lemma}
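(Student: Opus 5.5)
The plan is to combine Lemma~\ref{l:G26cor} with the elementary fact that central elements enlarge every centralizer. Since $|G|$ is a product of primes dividing $|V_n|$, it suffices to show that $Z(G)$ contains no element of prime order $r$ for every prime divisor $r$ of $|G|=|V_n|$.

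Fix such a prime $r$. By Lemma~\ref{l:G26cor} there is $x\in V_n$ such that $|x^{V_n}|_r=|V_n|_r$. Since $\cs(G)=\cs(V_n)$, some $g\in G$ satisfies $|g^G|=|x^{V_n}|$. Using $|G|=|V_n|$, we get
$$
|C_G(g)|_r=\frac{|G|_r}{|g^G|_r}=\frac{|V_n|_r}{|x^{V_n}|_r}=1,
$$
so $r$ does not divide $|C_G(g)|$.

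On the other hand, $Z(G)\leq C_G(g)$ for every $g\in G$. Hence $|Z(G)|$ divides $|C_G(g)|$, and therefore $r\nmid |Z(G)|$.

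Since $r$ was an arbitrary prime divisor of $|G|$, the order $|Z(G)|$, which divides $|G|$, has no prime divisors at all. Thus $Z(G)=1$.

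There is essentially no obstacle here. All the real work is in Lemma~\ref{l:G26cor}, and the hypothesis $|G|=|V_n|$ is used exactly once, to convert the equality of $r$-parts of the class size into triviality of the $r$-part of the centralizer in $G$.
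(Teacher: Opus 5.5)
Your proposal is correct and is essentially the paper's proof. The paper argues by contradiction: a prime $r$ dividing $|Z(G)|$ would force the $r$-part of every class size in $G$ to be strictly less than $|G|_r=|V_n|_r$, contradicting Lemma~\ref{l:G26cor}, which is your centralizer computation stated contrapositively.
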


\begin{proof}
Suppose that $Z(G)\neq1$. Then there is a prime divisor $r$ of $|Z(G)|$. It follows that the $r$-part of every element in $\cs(G)$ is strictly less than the $r$-part of $|G|=|V_n|$. This contradicts Lemma~\ref{l:G26cor}.
\end{proof}

\begin{lemma}\label{l:cenp}
Let $V_n\in\{A_n, S_n\}$ and $n\geq4$, $x\in V_n$, and $C=C_{V_n}(x)$. Suppose that $p$ is a prime such that $n/2<p\leq n$ and $p$ divides $|C|$. The either
\begin{enumerate}
\item $|C|=p|C_{V_{n-p}}(y)|$ for some $y\in V_{n-p}=S_{n-p}\cap V_n$, or
\item $|C|$ is a multiple of $|V_k|$, where $k\geq p$ and $V_k=S_k\cap V_n$.
\end{enumerate}
\end{lemma}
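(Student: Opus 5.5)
We work directly from the cycle structure of $x=(1^{i_1},2^{i_2},\ldots,n^{i_n})$ and the centralizer formula of Lemma~\ref{l:cenSn}. Since $p>n/2$, $p^2>n$ when $n\ge 4$ (because $p\geq 3$ and $2p>n$). Hence $p^2$ does not divide $|S_n|$, and any element of $S_n$ has at most one cycle of length divisible by $p$, namely a cycle of length exactly $p$. By Lemma~\ref{l:cenSn}, $|C_{S_n}(x)|=\prod_k k^{i_k}\,i_k!$. A factor $p$ can enter this product in only two ways: either $i_p=1$, giving the factor $p\cdot 1!$, or $i_k\geq p$ for some $k$, giving $i_k!$. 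Since $kp\le k\,i_k\le n<2p$, the latter forces $k=1$, i.e.\ $i_1\geq p$. The case $i_p\ge2$ is impossible, and $i_k!$ with $k\ge2$ is coprime to $p$. Passing from $S_n$ to $A_n$ changes the centralizer order by at most a factor $2$, so $p$ divides $|C_{A_n}(x)|$ if and only if it divides $|C_{S_n}(x)|$. We therefore split into these two cases.

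\emph{Case $i_p=1$, $i_1<p$.} Write $x=cy$, where $c$ is the $p$-cycle and $y$ is a permutation of the remaining $n-p$ points, regarded as an element of $S_{n-p}$. The centralizer of $x$ in $S_n$ preserves the support of $c$, since it is the unique $p$-cycle. Hence $C_{S_n}(x)=\langle c\rangle\times C_{S_{n-p}}(y)$, and so $|C_{S_n}(x)|=p\,|C_{S_{n-p}}(y)|$. This settles $V_n=S_n$, with $V_{n-p}=S_{n-p}$. For $V_n=A_n$, the permutation $c$ is even because $p$ is odd, so $y$ is even and lies in $A_{n-p}=S_{n-p}\cap A_n$. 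Moreover, an element $g=c^a h$ of the direct product, with $h\in C_{S_{n-p}}(y)$, is even exactly when $h$ is even. Thus $C_{A_n}(x)=\langle c\rangle\times C_{A_{n-p}}(y)$, which gives conclusion~(1). This step requires a little care with the parity bookkeeping, but it is routine.

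\emph{Case $i_1\ge p$.} Let $k=i_1\ge p$ and let $F$ be the set of fixed points of $x$. Then $\mathrm{Sym}(F)\cong S_k$ is a subgroup of $C_{S_n}(x)$, so $|C_{S_n}(x)|$ is a multiple of $|S_k|$, which gives~(2) for $V_n=S_n$. For $V_n=A_n$, note that $\mathrm{Alt}(F)\cong A_k=S_k\cap A_n$ centralizes $x$ and is even, so $A_k\le C_{A_n}(x)$ and $|A_k|$ divides $|C|$. This is conclusion~(2) for $V_n=A_n$. Note also that the two cases can overlap; for instance $i_1\geq p$ and $i_p=1$ cannot hold together, since then $n\ge 2p$. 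In any case either conclusion suffices.

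The main obstacle is really only the first step: the counting argument showing that $p$ can divide $|C|$ only through $i_p=1$ or $i_1\ge p$. It uses $2p>n$ to exclude $i_p\ge2$ and cycles of length $kp$ with $k\geq 2$, as well as $i_k\ge p$ for $k\ge2$. The rest is a direct application of Lemma~\ref{l:cenSn} together with a parity check for the alternating case.
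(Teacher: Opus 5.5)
Your proof is correct, but it takes a different route from the paper, which gives no argument of its own and simply quotes \cite[Lemma~2.7]{15Gor}. You verify the dichotomy directly from the centralizer formula.

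Since $p\ge 3$ and $2p>n$, you show that $p$ divides $|C_{S_n}(x)|$ exactly when $i_p=1$ or $i_1\ge p$, and these two conditions are mutually exclusive. Because $|C_{S_n}(x):C_{A_n}(x)|\le 2$ and $p$ is odd, the same criterion holds in $A_n$. In the first case you obtain the explicit decomposition $C_{V_n}(x)=\langle c\rangle\times C_{V_{n-p}}(y)$; the parity bookkeeping works because the $p$-cycle $c$ is even. In the second case $\mathrm{Sym}(F)$ or $\mathrm{Alt}(F)$ on the fixed-point set is an actual subgroup of $C$.

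Your version buys self-containment and a bit more than the statement asks for: a direct-product decomposition in type (i), a genuine subgroup $V_k\le C$ in type (ii), and exclusivity of the two types. The citation buys only brevity.

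You also correctly use $|C_{S_n}(x)|=\prod_k k^{i_k}\,i_k!$; Lemma~\ref{l:cenSn} as printed has $k!$ in place of $i_k!$.

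One wording slip: the sentence ``the two cases can overlap; for instance $i_1\ge p$ and $i_p=1$ cannot hold together'' contradicts itself. What you mean, and what you prove, is that the cases cannot overlap.
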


\begin{proof} This follows from \cite[Lemma~2.7]{15Gor}.
\end{proof}

For the centralizer $C$ from Lemma~\ref{l:cenp}, we say that $C$ is {\em of the first type}, if $|C|$ as in item~(i) of the lemma, and $C$ is {\em of the second type} otherwise. As readily seen, the order of every centralizer of the first type is strictly less than the order of the centralizer of the second type.

\begin{lemma}\label{l:Dusart}
If $n\geq 11$, then the interval $(4n/5,n]$ contains a prime.
\end{lemma}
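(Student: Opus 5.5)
This is a Bertrand-type statement, and I will get it from an explicit prime-counting estimate. The strongest usable input is the result of Nagura (and the later refinement of Dusart): for every real $x\geq 25$ the interval $(x,6x/5]$ contains a prime. Given $n$, I put $x=4n/5$, so that $6x/5=24n/25<n$. Whenever $4n/5\geq25$, that is $n\geq 32$ (using $n\geq 31.25$), Nagura's theorem gives a prime $p$ with
$$4n/5<p\leq 24n/25<n.$$
So $p\in(4n/5,n]$. This settles every $n\geq32$ at once, and with room to spare, since we only needed $6x/5\leq n$.

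The remaining range $11\leq n\leq 31$ is a finite check. It is enough to note that the interval $(4n/5,n]$ has length $n/5$, which is more than $2$ when $n\geq11$. So it suffices to list the primes $11,13,17,19,23,29,31$ and confirm that consecutive gaps are small enough. Concretely:
\begin{itemize}
\item for $n=11,12,13$ the prime $11$ works, since $4\cdot13/5=10.4<11$;
\item for $n=13,\dots,16$ use $13$, since $4\cdot16/5=12.8<13$;
\item for $n=17,18$ use $17$;
\item for $n=19,\dots,22$ use $19$, since $17.6<19$;
\item for $n=23,\dots,28$ use $23$, since $22.4<23$;
\item for $n=29,30$ use $29$;
\item for $n=31$ use $31$.
\end{itemize}
The condition $4n/5<p$ holds in each case, so every $n$ from $11$ to $31$ is covered.

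The only nontrivial step is the appeal to an explicit short-interval prime result, and the choice of source decides where the finite check must stop. If one prefers Dusart's sharper bound (for $x\geq 396738$ there is a prime in $(x,x(1+1/(25\ln^2x))]$), the hand check would have to reach an impractically large threshold. That is why I would cite Nagura's $6/5$ theorem with its threshold $25$, which makes the leftover verification the short table above. Note that $n=10$ genuinely fails, since $(8,10]$ contains no prime; this explains the hypothesis $n\geq 11$.
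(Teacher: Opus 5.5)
Your proof is correct and is essentially the paper's argument: the paper likewise cites Nagura's theorem to cover $n\geq 32$ and treats $11\leq n\leq 31$ as a direct check, which your table carries out correctly. One remark is loose: the interval having length $n/5>2$ does not by itself guarantee a prime, since prime gaps in this range reach $6$. Your explicit case list covers every $n$, so that remark is not needed.
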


\begin{proof} For $11\leq n\leq 31$ this can be easily verified. For $n\geq32$ this follows from \cite{52Nag}.
\end{proof}

\begin{lemma}\label{l:divide}
Let $K$ be a normal subgroup of a group $G$ and $\ov{G}=G/K$. Let $\ov{x}$ be the image of $x\in G$ in $\ov{G}$. Then the following hold:
\begin{enumerate}
\item $|x^K|$ and $|\ov{x}^{\ov{G}}|$ divide $|x^G|;$
\item if $|x|$ and $|K|$ are coprime, then $C_{\ov{G}}(\ov{x})=C_G(x)K/K$.
\end{enumerate}
\end{lemma}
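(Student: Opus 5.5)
For part (i), the plan is to compare the centralizers of $x$ in $K$ and in $G$ with the orbit–stabilizer theorem. We have $|x^K|=|K:C_K(x)|$ and $C_K(x)=C_G(x)\cap K$. Hence $|x^K|=|K:C_G(x)\cap K|=|C_G(x)K:C_G(x)|$, and this index divides $|G:C_G(x)|=|x^G|$, because $C_G(x)\le C_G(x)K\le G$.

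For the quotient, the key step is the inclusion $C_G(x)K/K\le C_{\ov{G}}(\ov{x})$. This holds because the image of any element commuting with $x$ commutes with $\ov{x}$. Put $H$ for the full preimage of $C_{\ov{G}}(\ov{x})$ in $G$. Then $C_G(x)\le H$, so
$$|\ov{x}^{\ov{G}}|=|\ov{G}:C_{\ov{G}}(\ov{x})|=|G:H|,$$
which divides $|G:C_G(x)|=|x^G|$. This completes (i).

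For (ii), the inclusion $C_G(x)K/K\le C_{\ov{G}}(\ov{x})$ was just shown, so only the reverse inclusion is needed. Take $g\in G$ with $\ov{g}\in C_{\ov{G}}(\ov{x})$, i.e.\ $x^g\in xK$. The subgroup $L=\langle x\rangle K$ is normalized by $g$, and $x^g\in L$. Since $\gcd(|x|,|K|)=1$, the subgroup $K$ is a normal Hall subgroup of $L$ and $\langle x\rangle$ is a complement to it.

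This is the main obstacle: we want $x^g$ to be conjugate to $x$ by an element of $K$. I would argue as follows. Both $\langle x\rangle$ and $\langle x^g\rangle$ are complements to $K$ in $L$, so by the Schur--Zassenhaus theorem they are conjugate by some $k\in K$. To upgrade this to conjugacy of the elements themselves, I would avoid the conjugacy part of Schur--Zassenhaus and use a counting argument instead, which needs no solvability hypothesis. The coset $xK$ contains exactly $|K|$ elements, and every $y\in xK$ with $|y|=|x|$ generates a complement, in which $y$ is the unique element mapping to $\ov{x}$. A direct approach is to show that $K$ acts transitively on $\{y\in xK: |y|=|x|\}$, i.e.\ that this set equals $x^K$.

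A cleaner route is to work inside $\langle x\rangle K\rtimes\langle g\rangle$: consider the group $M=\langle x,K\rangle$. Here $xK$ is a generator of the cyclic group $M/K$, and any element $y\in xK$ with $\gcd(|y|,|K|)=1$ satisfies $\langle y\rangle\cap K=1$. By Schur--Zassenhaus, $\langle y\rangle=\langle x\rangle^k$ for some $k\in K$. Since $y$ and $x^k$ lie in the same coset $xK$ and in the same complement, which maps isomorphically onto $M/K$, we get $y=x^k$.

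Applying this to $y=x^g$, which has order $|x|$, gives $x^g=x^k$ with $k\in K$. Then $gk^{-1}\in C_G(x)$, so $g\in C_G(x)K$. This proves $C_{\ov{G}}(\ov{x})=C_G(x)K/K$.

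The conjugacy part of Schur--Zassenhaus holds because one of $K$ and $M/K$ is solvable; here $M/K$ is cyclic, so no appeal to Feit--Thompson is needed.
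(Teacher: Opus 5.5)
Your proof is correct. The paper does not argue this lemma: it calls (i) well known and takes (ii) from Khukhro's book (Theorem~1.6.2), a standard coprime-action fact about fixed points of automorphisms on quotient groups. So your write-up is a self-contained replacement for a citation rather than a competing strategy.

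Part (i) is the expected index computation. Your final argument for (ii) is the standard Schur--Zassenhaus proof of such fixed-point statements, specialized to the inner automorphism induced by $x$. It runs as follows: $\langle x\rangle$ and $\langle x^g\rangle$ are complements to the normal Hall subgroup $K$ of $M=\langle x\rangle K$; Schur--Zassenhaus makes them $K$-conjugate; a complement meets the coset $xK$ in exactly one element, which forces $x^g=x^k$ with $k\in K$. The specialization gives self-containedness and makes $M/K$ cyclic, so the conjugacy half of Schur--Zassenhaus needs no appeal to Feit--Thompson, as you note. The citation gives brevity.

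When you write it up, cut the exploratory middle. The ``counting argument which needs no solvability hypothesis'' is never carried out. The transitivity of $K$ on $\{y\in xK : |y|=|x|\}$ is exactly what your Schur--Zassenhaus step proves, not an independent route to it. Also, the group $\langle x\rangle K\rtimes\langle g\rangle$ is not meaningful as written, since $g\in G$ and $\langle g\rangle$ may meet $\langle x\rangle K$, and you never use it. Only $M=\langle x\rangle K$ matters.
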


\begin{proof} Item (i) is well known. Item (ii) is \cite[Theorem~1.6.2]{93Khu}.
\end{proof}

Recall that a group $H$ is a section of a group $G$ if $H$ is a homomorphic image of some subgroup of~$G$.

\begin{lemma}\label{l:sectionAm}{\em\cite[Proposition 2.7]{PSV25}}
Suppose that $A_m$ is a section of $\operatorname{GL}_d(q)$ for some $d\geq 1$. Then $d\geq m-2$ for $m\geq9$.
\end{lemma}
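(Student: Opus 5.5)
The statement is Lemma~\ref{l:sectionAm}, quoted from \cite[Proposition~2.7]{PSV25}, and the plan is to reduce it to the known bounds on minimal faithful representation degrees of $A_m$. Suppose $A_m\simeq H/N$ with $N\trianglelefteq H\leq\operatorname{GL}_d(q)$, where $q=\ell^e$ for a prime $\ell$. The aim is to replace $H$ by a subgroup that acts irreducibly and still maps onto $A_m$, and then to apply the Landazuri--Seitz/Wagner-type lower bounds in every characteristic.

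First, I reduce to an irreducible, absolutely irreducible situation. Take a composition series of the natural module $V=\mathbb{F}_q^d$ for $H$. The kernel of the action of $H$ on the direct sum of the composition factors is a unipotent normal $\ell$-subgroup $U$. Since $A_m$ is simple, $A_m$ is a section of $H/U$. Moreover $H/U$ embeds in $\prod_i \operatorname{GL}(V_i)$, and a simple section of a subdirect product is a section of one factor. Hence we may assume $A_m$ is a section of an irreducible $H\leq\operatorname{GL}_{d_i}(q)$ with $d_i\leq d$. Extending scalars to $\overline{\mathbb{F}}_\ell$ only splits $V_i$ into Galois-conjugate pieces of smaller dimension, so we may also assume absolute irreducibility. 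Next, replace $H$ by a minimal subgroup $H_0$ mapping onto $A_m$. Then $N_0=H_0\cap N$ lies in the Frattini subgroup of $H_0$, so $N_0$ is nilpotent and $H_0$ is a "Frattini cover" of $A_m$.

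Second, I analyse $V\downarrow H_0$. By Clifford theory, either $V\downarrow H_0$ has a nontrivial composition factor on which some perfect subgroup mapping onto $A_m$ acts faithfully modulo a central $p'$-part, or $H_0$ acts through a group with a nilpotent normal subgroup $N_0$ and quotient $A_m$. In the latter case I consider an $H_0$-irreducible constituent $W$ of $V\downarrow O_r(N_0)$ for a prime $r$. I then use the standard argument: $A_m$ acts on the set of homogeneous components, or on an extraspecial/abelian $r$-group layer. In the imprimitive case this gives a transitive action of $A_m$ on $t\leq d$ blocks, so $t\geq m$ (minimal permutation degree of $A_m$ for $m\geq 9$), hence $d\geq m$. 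In the case of an abelian $r$-subgroup in the kernel, $A_m$ acts faithfully on it as on an $\mathbb{F}_r$-module, which recurses with strictly smaller data; in the extraspecial case, $A_m$ embeds in $\operatorname{Sp}_{2a}(r)$ with $r^a\mid d$, and recursion gives $2a\geq m-2$, so $d\geq r^{(m-2)/2}\geq m-2$.

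Third, in the quasisimple case, $A_m$ or $2.A_m$ acts faithfully and irreducibly in dimension at most $d$. For $m\geq 9$ the minimal degree of a nontrivial irreducible representation of $A_m$ in any characteristic is $m-2$, attained by the heart of the natural permutation module when $\ell\mid m$, and $m-1$ otherwise. This is James's/Wagner's theorem. Faithful representations of $2.A_m$ have degree at least $2^{\lfloor (m-2)/2\rfloor}\geq m-2$ by Wagner/Kleshchev--Tiep. This yields $d\geq m-2$.

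The main obstacle is the middle step: controlling the nilpotent kernel $N_0$ cleanly, without an inductive mess. I would first try induction on $d$, with the $r$-group cases handled via the minimal faithful degree of $A_m$ over $\mathbb{F}_r$, which is itself the statement being proved for smaller $d$.
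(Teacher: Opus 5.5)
The paper gives no argument of its own for this lemma; it imports the statement from \cite{PSV25}. Your proposal is therefore a genuinely different, self-contained route. In effect you reprove the Feit--Tits theorem on simple sections of linear groups: pass to an absolutely irreducible constituent, shrink to a minimal supplement $H_0$ with $N_0\le\Phi(H_0)$ and $H_0$ perfect, analyse $O_r(N_0)$ by Clifford theory and P.~Hall's description of $r$-groups of symplectic type. You then feed in the Wagner--James bound for $A_m$ and Wagner's bound for $2.A_m$. The architecture is right and the inequality does come out. Your route gains independence from the literature on sections, but you must carry out the Feit--Tits analysis in full. Quoting Feit--Tits plus the known minimal projective degrees of $A_m$ would reduce everything to your third step.

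As written, several points need repair.

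First, the spin bound is misquoted. For $\ell\neq2$, Wagner's bound for faithful irreducible $2.A_m$-modules is $2^{\lfloor (m-2-\kappa)/2\rfloor}$, with $\kappa=1$ if $\ell\mid m$ and $\kappa=0$ otherwise. For example, $2.A_{10}$ has an $8$-dimensional faithful module in characteristic $5$ (a half-spin module of $\mathrm{Spin}_8$), which is below your $2^{4}$. The inequality $\geq m-2$ still holds for $m\geq 9$, but with equality at $m=10$, $\ell=5$, so the exact bound matters.

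Second, your abelian case is unjustified: nothing forces $\Omega_1(A)$ to have rank $<d$, nor $H_0$ to act faithfully on it. It is also unnecessary. A normal abelian subgroup acting homogeneously on an absolutely irreducible module acts by scalars and is central; otherwise you are in the imprimitive case. So the only noncentral cases are the imprimitive one and $O_r(N_0)=E\circ Z$, with $E$ extraspecial of order $r^{1+2a}$ and $Z$ central in $H_0$.

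Third, in both of those cases it is $H_0$, not $A_m$, that acts.
\begin{itemize}
\item In the imprimitive case you need $N_0\le\Phi(H_0)$ to show that the kernel of $H_0\to S_t$ lies in $N_0$ and that a block stabilizer maps onto a proper subgroup of $A_m$; only then does $t\geq m$ follow.
\item In the extraspecial case you need perfectness of $H_0$ to show that the kernel of $H_0\to\mathrm{Sp}_{2a}(r)$ lies in $N_0$. The reason is that automorphisms of $R$ centralizing $Z(R)$ and $R/Z(R)$ form an abelian group.
\end{itemize}
Hence $A_m$ is only a section, not a subgroup, of $\mathrm{Sp}_{2a}(r)$. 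That suffices for induction on $d$, because $d\geq r^a>2a$ except when $r=2$ and $a\leq 2$, and those cases are excluded by $|\mathrm{Sp}_4(2)|=720$.

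Finally, shrinking $H$ to $H_0$ loses irreducibility, so you must pass to a composition factor of $V\downarrow H_0$ whose kernel lies in $N_0$. Such a factor exists: each kernel either equals $H_0$ or lies in $N_0$ by the Frattini property, and they cannot all equal $H_0$ since their intersection is an $\ell$-group.
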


Recall that a finite group (an element of a group) is a $\pi$-group (a $\pi$-element) if its order is a $\pi$-number. A subgroup $H$ is called a Hall $\pi$-subgroup of $G$ if $|H|$ is a $\pi$-number and $|G:H|$ is a $\pi'$-number. Following~\cite{56Hal},  we say that a finite group $G$ is a $D_\pi${\em-group} (or $G$ has the property $D_\pi$) if $G$ contains a Hall $\pi$-subgroup, all Hall $\pi$-subgroups of $G$ are conjugate, and every $\pi$-subgroup of $G$ is contained in some Hall $\pi$-subgroup of $G$. In this language, the celebrated Hall theorem claims that the solvable groups are $D_\pi$-groups \cite{28Hal}. The following assertion provides a useful criterion for a group to has the $D_\pi$-property.

\begin{lemma}{{\rm\cite[Theorem~6.6]{11VdRev.t}}}\label{l:dp}
Let $\pi$ be a set of primes, let $G$ be a group and $N$ a normal subgroup of $G$. The group $G$ is a $D_\pi$-group if and only if $N$ and $G/N$ are $D_\pi$-groups.
\end{lemma}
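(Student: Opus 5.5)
We would not reprove this from scratch: in the paper it is quoted from \cite[Theorem~6.6]{11VdRev.t}. Still, the natural route splits into three implications of very different difficulty, and we indicate how we would organize each.

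\emph{$N$ and $G/N$ are $D_\pi$ $\Rightarrow$ $G$ is $D_\pi$.} Let $\ov H$ be a Hall $\pi$-subgroup of $\ov G=G/N$, $M$ its full preimage, and $K$ a Hall $\pi$-subgroup of~$N$.
\begin{enumerate}
\item Since the Hall $\pi$-subgroups of $N$ are conjugate, the Frattini argument gives $M=N\,N_M(K)$.
\item $N_N(K)/K$ is a $\pi'$-group, normal in $N_M(K)/K$, and the quotient $N_M(K)/N_N(K)\simeq M/N$ is a $\pi$-group. Schur--Zassenhaus yields a complement, whose preimage $H$ is a Hall $\pi$-subgroup of~$G$.
\item For a $\pi$-subgroup $U$, we may assume $U\le M$ after conjugation, because $UN/N$ lies in a conjugate of $\ov H$.
\item Apply the same Frattini argument inside $UN$. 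Here $U\cap N$ lies in a conjugate of $K$ by the $D_\pi$-property of $N$. We reduce to showing that two $\pi$-complements of the $\pi'$-section $N_N(K)/K$ in $N_M(K)/K$ are conjugate. This holds by the conjugacy part of Schur--Zassenhaus, since one of a $\pi$-group and a $\pi'$-group has odd order and is therefore solvable by Feit--Thompson.
\end{enumerate}
Conjugacy of Hall subgroups of $G$ follows from the same argument applied to $U$ a Hall subgroup.

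\emph{$G$ is $D_\pi$ $\Rightarrow$ $G/N$ is $D_\pi$.} Images of Hall $\pi$-subgroups of $G$ are Hall in $G/N$, and conjugacy passes to the quotient. For a $\pi$-subgroup $\ov U$ of $G/N$ with preimage $X$, one needs $\ov U$ to be the image of a $\pi$-subgroup of $G$. Once $N$ is known to be $D_\pi$, this follows by applying the previous paragraph to $X$: $X$ is then $D_\pi$, its Hall $\pi$-subgroup maps onto $\ov U$, and it lies in a Hall $\pi$-subgroup of~$G$. So this direction reduces to the next one.

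\emph{$G$ is $D_\pi$ $\Rightarrow$ $N$ is $D_\pi$.} This is Wielandt's problem and is the main obstacle; we expect no elementary proof. The plan is as follows.
\begin{enumerate}
\item Take a minimal counterexample and reduce to $N$ minimal normal. Then $N\simeq S^k$ for a nonabelian simple $S$, the abelian case being trivial.
\item Using the classification of finite simple groups, invoke the known description of the Hall $\pi$-subgroups of simple groups and of their conjugacy classes. From this, show that if $N$ has a Hall $\pi$-subgroup but is not $D_\pi$, then the $\mathrm{Aut}(S)$-action on the classes of Hall or maximal $\pi$-subgroups produces a $\pi$-subgroup of $G$ not contained in any Hall $\pi$-subgroup of~$G$.
\end{enumerate}
This case-by-case verification for the simple groups is where all the real work lies.
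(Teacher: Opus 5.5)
The paper gives no proof of this lemma; it imports it from \cite[Theorem~6.6]{11VdRev.t}. Relying on that reference is therefore exactly what the paper does. Your outline of the ``only if'' direction (minimal counterexample, $N\simeq S^k$, CFSG-based analysis) is, as you say yourself, a pointer to that literature rather than a proof.

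The real problem is in the part you present as routine. In step~(iv) of the extension direction you have two facts:
\begin{itemize}
\item the Frattini argument in $UN$ gives $UN=N\,N_{UN}(K)$;
\item the $D_\pi$-property of $N$ lets you conjugate so that $U\cap N\le K$.
\end{itemize}
Neither fact puts $U$, or any conjugate of $U$, inside $N_{UN}(K)$. To reduce to conjugacy of complements of $N_N(K)/K$ in $N_M(K)/K$, you need $U$ to normalize some Hall $\pi$-subgroup of $N$. The inclusion $U\cap N\le K$ does not give $U\le N_G(K)$.

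Where the argument does go through:
\begin{itemize}
\item When $U\cap N$ is itself Hall in $N$, it does work. This is why your conjugacy argument for Hall subgroups of $G$ is fine.
\item When $UN<G$, an obvious induction on $|G|$ disposes of the case.
\end{itemize}
What remains is the case $G=UN$, $G/N$ a $\pi$-group, $U\cap N$ not Hall in $N$, and there your argument says nothing.

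Counting does not rescue this case. $U$ acts on the set $\Omega$ of Hall $\pi$-subgroups of $N$, and $|\Omega|=|N:N_N(K)|$ is a $\pi'$-number. But a $\pi$-group that is not a $p$-group can act on a set of $\pi'$-size without fixed points: take $\pi=\{2,3\}$ and $S_3$ acting with orbits of lengths $2$ and $3$. So the existence of a $U$-invariant Hall $\pi$-subgroup of $N$ is missing, and that is the real content of closure of $D_\pi$ under extensions.

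Consequences for the rest of your outline:
\begin{itemize}
\item The quotient direction survives. Applied to the preimage $X$ of $\overline{U}$, it only needs a Hall $\pi$-subgroup of $X$, which your correct steps (i)--(ii) provide via Frattini plus Schur--Zassenhaus existence.
\item The ``if'' direction of the lemma is not established.
\item Your reduction of the ``only if'' direction to a minimal normal subgroup $M\le N$ also depends on it. From $M\in D_\pi$ and (by induction) $N/M\in D_\pi$, you can conclude $N\in D_\pi$ only through closure under extensions.
\end{itemize}
For the paper this is immaterial, since the lemma is used purely as a quoted black box. As a proof, however, your sketch establishes only the existence and conjugacy parts of the extension statement.
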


The following obvious observation on $D_\pi$-groups is quite helpful.

\begin{lemma}\label{l:ordpi}
Suppose that $G$ is a $D_\pi$-group and $H$ is a Hall $\pi$-subgroup of $G$. Then each $\pi$-element of $G$ is conjugate to some element of~$H$.
\end{lemma}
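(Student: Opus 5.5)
The statement is immediate from the definitions of a $D_\pi$-group and a Hall $\pi$-subgroup, so the plan is just to unwind them. Let $g\in G$ be a $\pi$-element. Then $\langle g\rangle$ is a $\pi$-subgroup of $G$, because its order $|g|$ is a $\pi$-number.

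Since $G$ is a $D_\pi$-group, the third clause of the definition applies: every $\pi$-subgroup of $G$ lies in some Hall $\pi$-subgroup. Hence $\langle g\rangle\leq H_1$ for some Hall $\pi$-subgroup $H_1$ of $G$.

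By the second clause, all Hall $\pi$-subgroups of $G$ are conjugate, so $H_1=H^y$ for some $y\in G$. Then $g\in H^y$, which gives $g^{y^{-1}}\in H$. Thus $g$ is conjugate to an element of $H$.

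No step presents a real obstacle. The only point to check is that the Hall $\pi$-subgroup $H$ fixed in the statement is covered by the conjugacy clause. It is, because conjugacy holds among all Hall $\pi$-subgroups, and $H$ is one of them.
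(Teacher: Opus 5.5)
Your proof is correct: $\langle g\rangle$ is a $\pi$-subgroup, so it lies in some Hall $\pi$-subgroup, and that subgroup is conjugate to $H$. The paper gives no proof and calls the lemma an ``obvious observation''; your argument is exactly the direct unwinding of the definition of a $D_\pi$-group that this remark leaves to the reader.
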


%

\section{Proof of the main result}

Let $V_n\in\{A_n, S_n\}$, let $G$ be a finite group with $\cs(G)=\cs(V_n)$ and $|G|=|V_n|$.

Observe that the theorem can be easily verified for $n\leq4$. So we assume that $n\geq5$. Since $Z(G)=1$ in view of Lemma~\ref{l:ZofG}, we may suppose that $n,n+1,n+2$ are not primes for $V_n=A_n$; and $n,n+1$ are not primes for $V_n=S_n$, because Thompson's conjecture holds true otherwise, see \cite{05Al}. Moreover, Thompson conjecture has been verified when $n=10,16,22,26$ and $V_n=A_n$ in \cite{09Vas}, \cite{12Gor}, \cite{13Xu}, \cite{14Liu}, respectively. Thus, we may suppose that $n\geq27$ for $V_n=A_n$ and $n\geq9$ for $V_n=S_n$.

Further, $p$ is the greatest prime not exceeding $n$.

\begin{prop}\label{p:GorMain}
If $n\geq23$, then there exists a normal subgroup $K$ of $G$ such that $A_m\leq \ov{G}=G/K\leq S_m$, where $m\in[p,n]$.
\end{prop}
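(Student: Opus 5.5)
This proposition is essentially the main result of \cite{19GorA}, so the plan is to reduce to it rather than reprove it. That result states that a finite group $G$ with $Z(G)=1$ and $\cs(G)=\cs(V_n)$, where $V_n\in\{A_n,S_n\}$ and $n\geq23$, has a composition factor isomorphic to $A_m$ for some $m\in[p,n]$. More precisely, its proof produces a normal series $K\trianglelefteq L\trianglelefteq G$ with $L/K\simeq A_m$. The only hypothesis we must supply is the centerlessness of $G$. This follows from Lemma~\ref{l:ZofG}, since $|G|=|V_n|$, $\cs(G)=\cs(V_n)$ and $n\geq5$.

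To upgrade ``composition factor'' to the stated form $A_m\leq G/K\leq S_m$, I would proceed as follows.

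\emph{Step 1: locate $A_m$ as a chief factor.} Take the section $L/K\simeq A_m$ from \cite{19GorA}. Since $A_m$ is simple and $m\geq p>n/2$, there is no room for several copies, because $|A_m|^2\nmid |G|$: the prime $p$ divides $|G|=|V_n|$ exactly once. Hence $L/K$ is the unique nonabelian chief factor whose order is divisible by $p$. We may then take $K$ to be the largest normal subgroup of $G$ not involving $p$, i.e.\ the solvable-by-$p'$ part below $L$, and arrange that $L/K$ is the unique minimal normal subgroup of $G/K$.

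\emph{Step 2: show $C_{G/K}(L/K)=1$.} Let $C/K=C_{G/K}(L/K)$. Then $C/K$ is normal and meets $L/K$ trivially. Its order divides $|G|/(|K|\,|A_m|)$, which is coprime to $p$. I would argue that $C\leq K$, using the maximality of $K$: a nontrivial $C/K$ would contain a minimal normal subgroup of $G/K$ other than $L/K$, contradicting the uniqueness arranged in Step~1.

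Once $C_{G/K}(L/K)=1$, the group $\ov{G}$ embeds in $\operatorname{Aut}(A_m)$. For $m\geq23$ we have $\operatorname{Aut}(A_m)=S_m$, which gives $A_m\leq\ov{G}\leq S_m$.

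The main obstacle is Step~1, namely ensuring that the $K$ produced by the cited argument has the required shape: no other $p$-free nonabelian chief factor may sit above $L/K$. If the quoted result only gives a composition factor, I would control the quotient above $L$ by a divisibility count. Any extra factor sits inside $G/L$, whose order divides $|S_n|/|A_m|\leq 2\,n!/m!$. Since $m\geq p>4n/5$ by Lemma~\ref{l:Dusart}, this quotient is a number whose primes are all $\leq n-m<n/5$. If a further nonabelian factor $T$ centralized $L/K$, I would choose an element $x$ of prime order $p$ in $L$ and examine its class size via Lemma~\ref{l:divide}. The centralizer $C_{\ov{G}}(\ov{x})$ would then contain $T$, and comparing with the centralizer orders of $p$-elements of $V_n$ given by Lemma~\ref{l:cenp} should force $T$ into $C_{S_{n-p}}(y)$-sized pieces. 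That in turn should yield a contradiction with the class-size set.
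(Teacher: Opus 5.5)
Your proposal follows the paper's proof at its core: $Z(G)=1$ by Lemma~\ref{l:ZofG}, then the main result of \cite{19GorA}. Your Steps~1--2 correctly supply the routine passage, left implicit in the paper, from a composition factor $A_m$ to $A_m\le \ov{G}\le S_m$: take $K=O_{p'}(G)$ and work with $LK/K\simeq A_m$; then $p\,\|\,|G|$ makes $LK/K$ the unique minimal normal subgroup of $G/K$, so $C_{G/K}(LK/K)=1$. The final ``main obstacle'' paragraph is unnecessary and should be dropped, since its claim that all primes dividing $n!/m!$ are at most $n-m$ is false (for $m=n-1$ one gets $n!/m!=n$).
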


\begin{proof} Lemma~\ref{l:ZofG} yields $Z(G)=1$. Therefore, the claim follows from the main result of \cite{19GorA}.
\end{proof}

\begin{prop}\label{p:smallSn}
If $n\geq11$, then there exists a normal subgroup $K$ of $G$ such that $A_m\leq \ov{G}=G/K\leq S_m$, where $m\in[p,n]$.
\end{prop}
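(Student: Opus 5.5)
Since Proposition~\ref{p:GorMain} already covers $n\geq23$, and for $V_n=A_n$ we have reduced to $n\geq27$, only the case $V_n=S_n$ with $11\leq n\leq 22$ remains. Moreover $n$ and $n+1$ are not primes. I would prove the proposition directly, using the fact that $|G|=|S_n|$. This hypothesis replaces the heavier machinery of \cite{19GorA}.

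\textbf{Step 1: a $p$-element with small centralizer.} Here $p>n/2$, and by Lemma~\ref{l:Dusart} in fact $p>4n/5$. Hence the Sylow $p$-subgroup of $G$ has order $p$. Take $x\in S_n$ to be a $p$-cycle. Its centralizer has order $p\,(n-p)!$, which is of the first type. Choose $g\in G$ with $|g^G|=|x^{S_n}|$. Comparing $p$-parts shows that $p$ divides $|C_G(g)|$. Let $a$ be an element of order $p$ in $C_G(g)$. Using Lemma~\ref{l:cenp}, and the fact that the centralizer orders in $G$ are exactly $|G|/\cs(S_n)$, we get that every element of order $p$ has centralizer of order $p\,|C_{S_{n-p}}(y)|$ or of the second type. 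From this I would extract that $|C_G(a)|$ divides $p\,(n-p)!$, which is small compared with $|G|$.

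\textbf{Step 2: locating the composition factor.} Take a chief series of $G$ and let $R/K$ be the chief factor whose order is divisible by $p$. Write $\ov{G}=G/K$, so that $R/K\cong T^k$ is the minimal normal subgroup. Since $|G|_p=p$, we have $k=1$ when $T$ is nonabelian.

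First I would exclude the case where $R/K$ is abelian, i.e.\ an elementary abelian $p$-group of order $p$. In that case $\ov a$ is centralized by the full preimage of $C_{\ov G}(R/K)$, which has index dividing $p-1$ in $\ov G$. Hence $|C_{\ov G}(\ov a)|\geq |\ov G|/(p-1)$. Lemma~\ref{l:divide} gives that $|C_{\ov G}(\ov a)|$ divides $|C_G(a)|$, so
\[
|G|/(|K|(p-1))\leq p\,(n-p)!\,.
\]
This forces $|K|$ to be huge. I would then analyse $K$ through its own chief factors, using Lemma~\ref{l:divide}(ii) with a $p'$-part argument, together with Lemma~\ref{l:sectionAm}, to reach a contradiction. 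Roughly: if $K$ is large, then $a$ acts on $K$ and $C_K(a)$ is small, so $a$ acts almost fixed-point-freely on a large $p'$-group. This gives divisibility constraints that conflict with $\cs(S_n)$.

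So $T$ is nonabelian simple with $p\,\|\,|T|$ and $|T|$ dividing $n!$, and $\ov G\leq\operatorname{Aut}(T)$ modulo the centralizer. For $n\leq22$ the candidates are a finite list: simple groups with order dividing $22!$ and divisible by a prime $p>4n/5$. I would list them using the classification, for example $L_2(q)$, $M_{11}$, $M_{12}$, $M_{22}$, $M_{23}$, $A_m$, and so on. Each non-alternating candidate, and each $A_m$ with $m<p$ (impossible since $p\mid |T|$ forces $m\geq p$), is eliminated as follows. Comparing $|C_T(\ov a)|$, or the centralizer of a suitable element of order $r$ for another large prime $r$, with the values allowed by $\cs(S_n)$ gives a contradiction, as does comparing $|\operatorname{Out}(T)|$ with the index. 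This leaves $T=A_m$ with $p\leq m\leq n$. Finally, $C_{\ov G}(T)=1$ by minimality and the bound on $|C(\ov a)|$, so $A_m\leq \ov G\leq S_m$. For $m\neq6$ this follows because $\operatorname{Aut}(A_m)=S_m$.

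\textbf{Main obstacle.} The hard part is the case-by-case elimination of sporadic and Lie-type $T$, and of a large solvable $K$, for $11\leq n\leq22$. I would handle it by a computation with the explicit sets $\cs(S_n)$ for these twelve values of $n$.
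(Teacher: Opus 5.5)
\textbf{The gap is in Step~1, and it is the heart of the proposition.} The sentence ``From this I would extract that $|C_G(a)|$ divides $p\,(n-p)!$'' is exactly what must be proved, and nothing you wrote proves it.

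\emph{Why your argument does not work.} The auxiliary element $g$ only gives $g\in C_G(a)$, which is compatible with $C_G(a)$ being of either type. The second type also cannot be excluded by arithmetic with $\cs(S_n)$ alone. If $n-p\leq 1$, the second type would force $a\in Z(G)=1$, so the real issue is $n\in\{15,16,21,22\}$; these are also the only cases the paper treats. There $p\leq n-2$. The transposition class size $n(n-1)/2$ corresponds to centralizer order $2\,(n-2)!$, which is a multiple of $p!$.

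\emph{The missing idea.}
\begin{enumerate}
\item The only sizes $i\in\cs(S_n)\setminus\{1\}$ with $p!$ dividing $|G|/i$ are $n(n-1)/2$ and, for $n=16,22$, $n(n-1)(n-2)/3$.
\item The $r$-part of these sizes is exactly $r$, where $r=5$ for $n\in\{15,16\}$ and $r=7$ for $n\in\{21,22\}$.
\item Hence, for a suitable Sylow $r$-subgroup $R$ of $G$, the subgroup $R\cap C_G(a)$ has index $r$ in $R$. It is therefore normal in $R$ and meets $Z(R)$ nontrivially.
\item Any $1\neq y\in Z(R)\cap C_G(a)$ has $|y^G|\neq1$ and $|y^G|$ coprime to $rp$.
\item No such number lies in $\cs(S_n)$, which is the contradiction.
\end{enumerate}

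\textbf{Step~2 is also only a sketch, and the difficulty is not where you put it.} You locate it in the case-by-case elimination of $T$ and the ``almost fixed-point-free'' analysis of $K$. In fact, once $C_G(a)$ is known to be of the first type, no prime $s\in(n/2,n]$ with $s\neq p$ divides $|C_G(a)|$. Suppose $p$ and $s$ divided the orders of different composition factors. Then $G$ would be a $D_{\{s,p\}}$-group by Lemma~\ref{l:dp}. By Lemma~\ref{l:ordpi}, a conjugate of $a$ would lie in a Hall subgroup of order $sp$. That subgroup is cyclic, which is a contradiction.

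This single argument removes your abelian case and every candidate $T$ that misses such a prime. What remains is a simple section whose order divides $n!$ and is divisible by all primes in $(n/2,n]$. The tables of \cite{09Zav} then leave only $A_m$ with $m\in[p,n]$.

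\textbf{A smaller error.} The claim $C_{\ov G}(T)=1$ does not follow from minimality of $R/K$, nor from your bound on $|C_{\ov G}(\ov a)|$. You should replace $K$ by the full preimage of $C_{\ov G}(T)$. Since $C_{\ov G}(T)\cap T=1$, the new quotient still contains a copy of $T$, now with trivial centralizer. This gives $A_m\leq\ov G\leq S_m$.
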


\begin{proof} By the above arguments, we need to consider the groups $S_n$ for $n\in\{15,16,21,22\}$ only.

Let $n\in\{15,16\}$. Suppose that $x$ is an element of $G$ of order $p=13$ and $C=C_G(x)$ is of the second type, see Lemma~\ref{l:cenp}(ii). Then according to this lemma $p!$ divides $|C|$. Since $\cs(G)=\cs(S_n)$, there are only two possible numbers $i_1,i_2$ in $\cs(G)\setminus\{1\}$ such that $p!$ can divide the ratio $|G|/i_k$, $k=1,2$. These numbers are $i_1=n(n-1)/2$, so $i_1$ is equal to $3\cdot5\cdot7$ or $2^3\cdot3\cdot5$ for $n=15$ and $16$, respectively; and $i_2=n(n-1)(n-2)/3=2^5\cdot5\cdot7$ possible only for $n=16$. It follows that for $r=5$ there is a Sylow $r$-subgroup $R$ of $G$ with $M=R\cap C$ of index~$r$. Since $M$ is maximal in $R$, it intersects $Z(R)$ nontrivially. Therefore, there is an element $y\in Z(R)\cap C$. Then the index $|G:C_G(y)|$ must be coprime to $65$. Since $\cs(S_{15})$ and $\cs(S_{16})$ do not have such numbers, we derive a contradiction.

Thus, $C$ is of the first type, see Lemma~\ref{l:cenp}(i). In particular, each prime $s\in(n/2,n]$ distinct from $p$ does not divide the order of~$C$. Furthermore, since $|G|=|V_n|$, the numbers $|G|/sp$ and $sp$ are coprime. Suppose that $p$ and one of such $s$ divide the orders of distinct composition factors of $G$. Then Lemma~\ref{l:dp} implies that $G$ is a $D_\pi$-group for $\pi=\{s,p\}$. Therefore, $G$ has a Hall $\pi$-subgroup~$H$ which contains the element $x$, see Lemma~\ref{l:ordpi}. The subgroup $H$ being of order $sp$ must be cyclic, which contradicts the fact that $s$ does not divide the order of~$C$. Thus, there is a normal subgroup $K$ of $G$ such that $\ov{G}=G/K$ has a nonabelian simple subgroup $L$ whose order is a multiple of all primes from the interval $(n/2,n]$. In particular, $G$ is nonsolvable. Since the order of $L$ must divide the order of $V_n$, using \cite{09Zav}, one can easily verified that there are no such groups except $A_m$, where $m\in[p,n]$.

The case $n\in\{21,22\}$ can be considered in the same manner. Here $p=19$, and one can take $r=7$. Then again there is an element $y$ from the center of a Sylow $r$-subgroup $R$ of $G$ such that $y\in C=C_G(x)$, where $x$ is an element of order~$p$. This proves that $G$ is nonsolvable. Moreover, there is a normal subgroup $K$ of $G$ such that $\ov{G}=G/K$ has a nonabelian simple subgroup $L$ which order is a multiple of all primes from the interval $(n/2,n]$. It was checked that there are no such groups except $A_m$, where $m\in[p,n]$.
\end{proof}

\begin{prop}\label{p:S10}
The theorem holds for $V_n=S_n$ and $n=9,10$.
\end{prop}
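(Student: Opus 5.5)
Here $G$ is a finite group with $|G|=|S_n|$ and $\cs(G)=\cs(S_n)$ for $n\in\{9,10\}$. By Lemma~\ref{l:ZofG} we have $Z(G)=1$. The plan has two parts. First we show that $G$ has a normal subgroup $K$ with $A_m\le G/K\le S_m$ for some $m\in\{7,8,9,10\}$, imitating Proposition~\ref{p:smallSn}. Then we show that $K=1$ and $m=n$.

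\textbf{Finding the alternating section.} Take $p=7$, the greatest prime not exceeding $n$, and an element $x\in G$ of order $7$. From Lemma~\ref{l:cenSn} we list the members of $\cs(S_n)$ whose cofactor $|G|/i$ is divisible by $7$. These are the classes of elements with a $7$-cycle, whose centralizers have order $7|C_{S_{n-7}}(y)|$, together with the classes whose centralizer contains $S_k$ for some $k\ge7$, namely transpositions and $3$-cycles when $n=10$.

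Suppose $C=C_G(x)$ has a cofactor of the second type. We would pick a small prime $r$, probably $r=5$ (or $r=3$), for which $|C|$ misses exactly a factor $r$ of the $r$-part of $|G|$. Arguing as in Proposition~\ref{p:smallSn}, $C$ then contains a central element $y$ of a Sylow $r$-subgroup. This forces an index in $\cs(G)$ coprime to $7r$, and we check directly that no such index exists. So $C$ is of the first type, and $5\nmid|C|$ (for $n=10$ we have $5\in(n/2,n]$; for $n=9$ the primes $5$ and $7$ also lie in $(n/2,n]$).

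Next, if $5$ and $7$ divide the orders of different composition factors, Lemma~\ref{l:dp} gives a Hall $\{5,7\}$-subgroup. It is cyclic of order $35$ and contains a conjugate of $x$ by Lemma~\ref{l:ordpi}, contradicting $5\nmid|C|$. Hence some nonabelian simple section $L$ has order divisible by $35$ and dividing $10!$. By the classification of simple groups whose order has prime divisors in $\{2,3,5,7\}$ \cite{09Zav}, $L$ is one of $A_7$, $A_8$, $A_9$, $A_{10}$, $L_2(49)$, $U_3(5)$, $J_2$, $A_8\cong L_4(2)$, $L_3(4)$, $S_6(2)$, $O_8^+(2)$, and so on. Order divisibility into $|S_n|$ together with the condition $7^2\nmid$ kills $L_2(49)$. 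For the others, we show they cannot occur by using the fact that a composition factor $L$ gives $\cs(L)$-type divisibility constraints via Lemma~\ref{l:divide}. We expect $S_6(2)$, $O_8^+(2)$, $U_4(3)$, $J_2$ and $L_3(4)$ to be excluded by comparing the $7$-element centralizer structure: for example, in $S_6(2)$ the centralizer of an element of order $7$ has order $7$, while in $G$ it is $7\cdot|C_{S_{n-7}}(y)|$, with even order forced for $n\ge9$. Making this comparison rigorous is the \textbf{main obstacle}, and we would handle it case by case, using the fact that the extension $G/K$ is almost simple with socle $L$ and that $|K|$ is small.

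\textbf{Showing $K=1$ and $m=n$.} Once $A_m\le G/K\le S_m$ with $7\le m\le n$, we have $|K|=|G|/|G/K|$, which divides $n!/(m!/2)$, a $\{2,3,5\}$-number of small size. If $K\ne1$, take a minimal normal subgroup $N\le K$ of $G$, which is elementary abelian of order $q^d$ with small $d$ (since $|K|\le 2\cdot 10\cdot 9\cdot 8$). Then $G/C_G(N)$ embeds in $\operatorname{GL}_d(q)$. The constraint of Lemma~\ref{l:sectionAm} does not apply for $m<9$, so instead we use that $A_m$ with $m\ge7$ has no faithful representation of degree below $m-2\ge5$ unless $q=2$ and $A_7,A_8\le\operatorname{GL}_4(2)$. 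Consequently, either $A_m$ centralizes $N$ or $d\ge4$. If $A_m$ centralizes $N$, then a $7$-element $x$ centralizes $N$, and Lemma~\ref{l:divide} gives $|C_G(x)|\ge 7\cdot|N|\cdot(\ldots)$. We would compare this with the first-type centralizer orders $7|C_{S_{n-7}}(y)|$ and with the requirement that classes of size $|G|/(7\cdot \mathrm{small})$ exist, which should yield a contradiction. We then conclude $K=1$, so $|G|=n!$ forces $G\cong S_n$.
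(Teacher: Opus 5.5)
\textbf{The proposal has genuine gaps.} The most serious one is exactly where you placed your ``main obstacle''.

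\textbf{The second-type case fails for $n=9$.} For $n=9$ the only second-type possibility is $|x^G|=36$, i.e.\ $|C_G(x)|=2\cdot 7!=2^5\cdot3^2\cdot5\cdot7$.
\begin{itemize}
\item For $r=5$ nothing is missing, since $|C_G(x)|_5=|G|_5$. The element $y$ you produce has $|G:C_G(y)|$ coprime to $35$, but $36$ is such a class size (the transpositions of $S_9$), so there is no contradiction.
\item For $r=2,3$ the index $|R:R\cap C|$ is at least $r^2$, so the maximal-subgroup trick does not apply.
\end{itemize}
For $n=10$ your $r=5$ argument works and is essentially the paper's.

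The paper treats $n=9$ differently. If $7$ divides only a cyclic composition factor, then $G$ is a $D_{\{3,7\}}$-group. A Hall $\{3,7\}$-subgroup $H\ni x$ of order $3^4\cdot7$ has a normal Sylow $7$-subgroup, so $3^3$ divides $|C_H(x)|$. This contradicts $|C_G(x)|_3=3^2$. Otherwise $7$ divides a nonabelian composition factor $L$ with $|L|$ dividing $9!$. By the Atlas, the class of the image of $x$ in $L$ has size with $2$-part at least $2^3$, which cannot divide $36$ by Lemma~\ref{l:divide}(i).

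\textbf{The identification of $L$ is not done.} For $n=9,10$ the interval $(n/2,n]$ contains too few primes for the argument of Proposition~\ref{p:smallSn} to pin $L$ down. The missing idea is to push the Hall-subgroup argument further. In the first-type case $|C_G(x)|$ divides $14$ (resp.\ $42$). The same reasoning with $\pi=\{3,7\}$ and $\{5,7\}$ then forces the composition factor $L$ through which $x$ passes to carry most of the $3$- and $5$-parts of $|G|$. Specifically, $3^4\cdot5\cdot7$ divides $|L|$ for $n=9$, and $3^3\cdot5^2\cdot7$ divides $|L|$ for $n=10$. Since $|L|$ divides $n!$, this leaves only $A_n$, with one exception for $n=10$.
\begin{itemize}
\item $J_2$, of order $10!/6$, also passes the order test, so you were right to worry about it.
\item It is excluded as follows. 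An element $y$ with $|y^G|=45$ would have to centralize the chief factor $J_2$, whose proper subgroups all have index at least $100$.
\item Hence $y$ would lie in a normal subgroup of order dividing $6$, which is impossible.
\end{itemize}
Once $L\cong A_n$, we have $|G|=2|L|$, and $Z(G)=1$ gives $G\cong S_n$ at once.

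\textbf{Your second half is unnecessary, and as written it is also incomplete.}
\begin{itemize}
\item The case $N\cong 2^4$ with $m=7$, which the orders allow, is never treated.
\item The centralizing case ends with ``should yield a contradiction''.
\item Nothing excludes a second nonabelian composition factor alongside $A_7$ (note $4\,|A_6|\,|A_7|=10!$). So even the reduction to $A_m\le G/K\le S_m$ is not justified.
\end{itemize}

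\textbf{Minor slips.}
\begin{itemize}
\item $5\notin(5,10]$.
\item For $n=10$ the Hall $\{5,7\}$-subgroup has order $175$, not $35$. It is still abelian, so that step survives.
\item $S_6(2)$, $O_8^+(2)$, $U_4(3)$ and $U_3(5)$ are already excluded by order alone.
\end{itemize}
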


\begin{proof} Let $x$ be an element of order $p=7$. Suppose first that $C=C_G(x)$ is of the second type.

Let $n=10$.  Since $p!$ divides $|C|$, there are only two numbers $i_1=n(n-1)/2=3^2\cdot5$ and $i_2=n(n-1)(n-2)/3=2^4\cdot3\cdot5$ in $\cs(G)\setminus\{1\}$ such that $p!$ divides the ratio $|G|/i_k$, $k=1,2$. It follows that there is a Sylow $5$-subgroup $R$ of $G$ intersecting $C$ nontrivially. Let $y\in R\cap C$. Since $R$ is of order $25$, it is abelian. Hence $|G:C_G(y)|$ must be coprime to $35$. Since the set $\cs(G)=\cs(S_{10})$ does not have such numbers, we derive a contradiction.

Suppose $n=9$. Since $p!$ divides $|C|$, we have $|x^G|=2^2\cdot 3^2$. Suppose that $G$ has a composition factor of order $7$. Then Lemma~\ref{l:dp} implies that $G$ is a $D_\pi$-group for $\pi=\{3,7\}$. Hence $G$ contains a Hall $\pi$-subgroup $H$ of order $3^4\cdot7$ and we may assume that $x\in H$. Since the order of $3$ module $7$ is $6$, the order of $C_H(x)$ is at least $3^3\cdot7$, but the $3$-part of $C_G(x)$ is equal to $3^2$, a contradiction. Therefore, $7$ divides the order of a nonabelian composition factor $L$ of $G$. Let $\ov{x}$ be the image of $x$ in $L$. Using \cite{Atlas} and getting through all possible nonabelian simple groups $L$ such that $|L|$ divides $|S_9|$, we see that the $2$-part of $|\ov{x}^L|$ is at least $2^3$. But $|\ov{x}^L|$ must divide $|x^G|$ in view of Lemma~\ref{l:divide}(i), a contradiction.

Thus, $C$ is of the first type. In particular, $|C|=7d$, where $d$ divides $2!$ in the case $n=9$ and $d$ divides $3!$ in the case $n=10$. If $G$ has a composition factor of order $7$, then $G$ is a $D_\pi$-group for $\pi$ equal to $\{5,7\}$ and $\{3,7\}$. However, a Hall $\{5,7\}$-subgroup of order $35$ for $n=9$ or $175$ for $n=10$ must be cyclic, which is impossible in our situation. It follows that $G$ is nonsolvable. Moreover, the order of a nonabelian composition factor $L$ of $G$, which contains an element of order $7$, is a multiple of  $3^4\cdot5\cdot7$ in the case $n=9$ and $3^3\cdot5^2\cdot7$ in the case $n=10$. The arguments are similar to the ones from the previous paragraph. Since $|L|$ divides $|S_n|$, it can be easily verified (see, e.g., \cite{09Zav}) that $L\simeq A_9$ for $n=9$ and $L\simeq A_{10}$ for $n=10$.

To complete the proof, it remains to deal with the case, when $G$ has a composition factor isomorphic to $A_n$ but $G\not\simeq S_n$. In this case, $G$ includes a normal subgroup $K$ with $G/K\simeq A_n$. Since $|G|=|S_n|$, the order of  $K$ equals~$2$, so and $Z(G)=K\neq1$, a contradiction.
\end{proof}

Thus, $n\geq15$ and there exists a normal subgroup $K$ of $G$ such that $A_m\leq \ov{G}=G/K\leq S_m$, where $m\in[p,n]$. Suppose further to the contrary that $G$ is not isomorphic to $V_n$.

\begin{lemma}\label{l:mlesen}
We have $m<n$.
\end{lemma}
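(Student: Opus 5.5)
We argue by contradiction and assume $m=n$, so that $A_n\leq \ov{G}=G/K\leq S_n$. The aim is to show that then $K=1$ and $\ov{G}$ has the same order as $V_n$, which forces $G\simeq V_n$ and contradicts the standing assumption that $G\not\simeq V_n$.

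We begin by comparing orders. Since $|G|=|V_n|$ and $|\ov{G}|\in\{n!/2,\,n!\}$, the order of $K$ is $|V_n|/|\ov{G}|\leq 2$, and there are three cases.

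\textbf{Case $V_n=A_n$.} Here $|K|\cdot|\ov{G}|=n!/2$ with $|\ov{G}|\geq n!/2$. Hence $K=1$ and $\ov{G}=A_n$, so $G\simeq A_n$.

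\textbf{Case $V_n=S_n$ and $|\ov{G}|=n!$.} Again $K=1$ and $G\simeq S_n$.

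\textbf{Case $V_n=S_n$ and $\ov{G}\simeq A_n$.} Here $|K|=2$. A normal subgroup of order $2$ is central, so $K\leq Z(G)$. Lemma~\ref{l:ZofG} gives $Z(G)=1$, which contradicts $|K|=2$.

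This is exactly the closing argument of Proposition~\ref{p:S10}. In every case we obtain either $G\simeq V_n$, contradicting the assumption $G\not\simeq V_n$, or a contradiction with $Z(G)=1$. Hence $m<n$.

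The only potential subtlety is the first case, where $|\ov{G}|$ might a priori exceed $|G|$. This is impossible because $\ov{G}$ is a quotient of $G$, so $|\ov{G}|\leq|G|=n!/2$. Combined with $A_n\leq\ov{G}$, this gives $\ov{G}=A_n$. Beyond this, no further obstacle is expected.
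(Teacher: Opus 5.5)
Your proof is correct and follows the paper's argument. The paper compresses your case analysis into the single observation that $m=n$ together with the standing assumption $G\not\simeq V_n$ forces $V_n=S_n$ and $|K|=2$, whence $1\neq K\leq Z(G)$, contradicting Lemma~\ref{l:ZofG}; your explicit handling of the cases with $K=1$ simply spells out the step the paper leaves implicit.
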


\begin{proof} If $m=n$, then $V_n=S_n$ and $|K|=2$. It follows that $Z(G)\neq1$, a contradiction.
\end{proof}

\begin{lemma}\label{l:CGK}
$C_G(K)\leq K$.
\end{lemma}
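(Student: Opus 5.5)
The plan is to argue by contradiction, using the fact that $C_G(K)$ is normal in $G$. Put $C=C_G(K)$. Then $CK/K$ is a normal subgroup of $\ov{G}$. Since $A_m\leq\ov{G}\leq S_m$ with $m\geq p\geq 13$, the only normal subgroups of $\ov{G}$ are $1$, $A_m$ and possibly $S_m$. If $CK/K=1$ we are done, so assume $CK/K\geq A_m$.

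Two preliminary facts are needed.
\begin{itemize}
\item Since $p>n/2$, the $p$-part of $|G|=|V_n|$ is exactly $p$. As $p$ divides $|A_m|$, $p$ does not divide $|K|$.
\item $CK/K\simeq C/(C\cap K)$ and $C\cap K=Z(K)$. Hence $C$ contains an element $x$ of order $p$, and $x$ centralizes $K$. So $K\leq C_G(x)$.
\end{itemize}

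Next I compute $|C_G(x)|$ exactly. Because $|x|=p$ is coprime to $|K|$, Lemma~\ref{l:divide}(ii) gives $C_{\ov{G}}(\ov{x})=C_G(x)K/K$. Together with $C_K(x)=K$ this yields
$$|C_G(x)|=|K|\cdot|C_{\ov{G}}(\ov{x})|.$$
Here $\ov{x}$ is a $p$-cycle in $\ov{G}\leq S_m$, so $|C_{\ov{G}}(\ov{x})|=p\,c$ with $c$ dividing $(m-p)!$. Also $|K|=|V_n|/|\ov{G}|$, which is $n!/m!$ up to a factor $1$, $2$ or $1/2$. Thus $|C_G(x)|$ is essentially $p\,c\,n!/m!$. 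In particular it is divisible by $n(n-1)\cdots(m+1)$, and Lemma~\ref{l:mlesen} ($m<n$) guarantees this product is nontrivial.

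Now compare with $\cs(V_n)=\cs(G)$. Since $|C_G(x)|=|G|/|x^G|$ is the order of the centralizer in $V_n$ of some element whose order is divisible by $p$, Lemma~\ref{l:cenp} applies. Either $|C_G(x)|=p\,|C_{V_{n-p}}(y)|$ (first type), or $|C_G(x)|$ is a multiple of $|V_k|$ with $k\geq p$ (second type).

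In the second type, $|C_G(x)|$ is divisible by every prime in $(n/2,p]$ and by a large power of $2$. But the only contribution beyond $p\,c$ comes from $n!/m!$, with $m\geq p$. So the $p$-free part is bounded by roughly $(m-p)!\,n!/m!$. This is far smaller than $|V_k|/p\geq (p-1)!/2$, because $n-p$ is small relative to $n$ (Lemma~\ref{l:Dusart} gives $n-p<n/5$, and for $n\geq 15$ one checks small cases directly). That is a contradiction.

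In the first type, $|C_G(x)|=p\,|C_{V_{n-p}}(y)|$ divides $p\,(n-p)!$. Hence $n!/m!$ divides $2(n-p)!$. Since $m<n$, the factor $n$ divides $n!/m!$. I then need a prime power in $n$, or in $n(n-1)$ if $m\le n-2$, that does not divide $(n-p)!$.

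This divisibility step is the main obstacle. The case $m=n-1$ with $n$ composite needs care, because $n$ may well divide $2(n-p)!$. In that case I would use that $\ov{x}$ can be chosen, or is forced, to be a $p$-cycle fixing $m-p$ points. Then the exact first-type value $p\,|C_{V_{n-p}}(y)|$ must match $|K|\,p\,|C_{\ov{G}}(\ov{x})|$ with $|C_{\ov{G}}(\ov{x})|$ equal to $|S_{m-p}|$ or $|A_{m-p}|$. Matching these equalities with $n-p$ versus $m-p=n-1-p$ forces $|K|\in\{n,n/2,2n\}$ equal to a ratio like $(n-p)$. This is impossible because $n-p<n/5$ by Lemma~\ref{l:Dusart}.

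If that ratio argument fails in some residual case, I would instead choose a second prime $s\in(4n/5,p)$. I would run the same computation for an element of order $s$ in $C$, using that both $p$ and $s$ exceed $n-p$, to kill the remaining possibilities.
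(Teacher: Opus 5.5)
Your setup matches the paper's: an element $x$ of order $p$ in $C_G(K)$ with $\ov{x}$ a $p$-cycle, the formula $|C_G(x)|=|K|\,|C_{\ov{G}}(\ov{x})|$ from Lemma~\ref{l:divide}(ii), and the dichotomy of Lemma~\ref{l:cenp}. Your second-type case is also the paper's size bound ($|C_G(x)|$ below $p!/2$).

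The gap is in the first-type case. From $|C_G(x)|=p\,|C_{V_{n-p}}(y)|$, which divides $p\,(n-p)!$, you pass to ``$n!/m!$ divides $2(n-p)!$''. This throws away the factor $(m-p)!$ contributed by $C_{\ov{G}}(\ov{x})$. You then hope to find a prime power in $n$ or $n(n-1)$ not dividing $(n-p)!$, but that search cannot succeed in general. Take $n=120$, where $n,n+1,n+2$ are all composite, $p=113$ and $m=119$. Then $n!/m!=120$, which divides $2\cdot 7!=10080$. You leave this step open. Your fallback for $m=n-1$ treats $|C_{V_{n-p}}(y)|$ as equal to $|S_{n-p}|$ or $|A_{n-p}|$, which is unjustified because $y$ is an arbitrary element of $V_{n-p}$. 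The ``second prime $s$'' remedy is not carried out.

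The missing step is the same size comparison you already used for the second type; this is the paper's ``$c_1<c$''. Since $|C_{\ov{G}}(\ov{x})|\geq p\,(m-p)!\,|\ov{G}|/m!$ (with equality for $\ov{G}=S_m$), you get
$$|C_G(x)|=\frac{|V_n|}{|\ov{G}|}\,|C_{\ov{G}}(\ov{x})|\geq p\,(m-p)!\,\frac{|V_n|}{m!}.$$
Every first-type order is at most $p\,|V_{n-p}|$. Because $m<n$,
$$\frac{n!}{m!}=\prod_{i=0}^{n-m-1}(n-i)>\prod_{i=0}^{n-m-1}(n-p-i)=\frac{(n-p)!}{(m-p)!}.$$
Hence $|V_n|/m!>|V_{n-p}|/(m-p)!$. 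The only special-looking case is $V_n=A_n$ with $n=p+1$ and $m=p$, which gives $(p+1)/2>1$. So $|C_G(x)|$ strictly exceeds every first-type order, and by your second-type estimate it lies below every second-type order. Therefore $|G|/|C_G(x)|\notin\cs(V_n)$. This works uniformly for all $m<n$, with no divisibility argument and no case analysis on the arithmetic of $n$. In the $n=120$ example, $120\cdot 6!=86400$ already exceeds $7!=5040$.
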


\begin{proof} Assume the opposite. Then $A_m \leq C_G(K)K/K\leq S_m$. Hence there is $x\in C_G(K)$ of order $p$ such that its image $\ov{x}\in\ov{G}$ is a cycle of length $p$ in $A_m$. Observe that $|x|$ and $|K|$ are coprime, so Lemma~\ref{l:divide}(ii) yields $C_{\ov{G}}(\ov{x})=C_G(x)K/K\simeq C_G(x)/C_K(x)$. Since $|G|=|V_n|$ and $C_K(x)=K$, it follows that
$$
c=|C_G(x)|=|C_{\ov{G}}(\ov{x})||K|=\frac{p\,(m-p)!\,n!}{d\,m!},\mbox{ where }d=|V_n|/|A_n|.
$$

Denote by $c_1$ the maximum of the orders of the centralizers of first type from item (i) of Lemma~\ref{l:cenp}. Since $m<n$, we have $c_1<c$. On the other hand, $p\geq4n/5$ in view of Lemma~\ref{l:Dusart}. Hence $c<p!$, so $c$ is less than the order of any centralizer of the second type from item (ii) of Lemma~\ref{l:cenp}. This is a contradiction, because $|G|/c\in\cs(G)=\cs(V_n)$.
\end{proof}

\begin{lemma}\label{l:RinK}
Suppose that $R$ is a Sylow $r$-subgroup of $K$ for some prime $r$. Then $$|R|<n\cdot r^{n/5}.$$
\end{lemma}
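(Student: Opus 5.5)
Our plan is to combine Lemma~\ref{l:CGK} with Lemma~\ref{l:sectionAm}, applied to a chief-factor-type structure built from $R$. Since $|K|=|G|/|\ov{G}|$ divides $n!/(m!/2)$ and $m\geq p>4n/5$ by Lemma~\ref{l:Dusart}, the $r$-part of $|K|$ is at most the $r$-part of $2\,n!/m!$. This is a product of at most $n-m<n/5$ consecutive integers (times $2$). So a crude first bound is $|R|\leq 2\,(n!/m!)_r$. The naive estimate gives only $n^{n/5}$, which is too weak, so the argument must use the arithmetic of the $r$-parts.

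For consecutive integers $m+1,\ldots,n$, the $r$-part of their product is bounded by $r^{\lceil (n-m)/r\rceil}\cdot$(contributions of higher powers of $r$). We will use the standard estimate: the $r$-adic valuation of $n!/m!$ equals $\sum_{i\geq1}(\lfloor n/r^i\rfloor-\lfloor m/r^i\rfloor)$. Each term is at most $\lfloor (n-m)/r^i\rfloor+1$, and only terms with $r^i\leq n$ are nonzero. Hence
$$
\nu_r(n!/m!)\leq \sum_{i:\,r^i\leq n}\Bigl(\frac{n-m}{r^i}+1\Bigr)\leq \frac{n-m}{r-1}+\log_r n.
$$
This gives $(n!/m!)_r\leq r^{(n-m)/(r-1)}\cdot n$.

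For odd $r$, or whenever $(n-m)/(r-1)\leq n/5$ with room to absorb the extra factor $2$ (which only matters for $r=2$), we get $|R|<n\cdot r^{n/5}$, since $n-m<n/5$. The critical case is $r=2$, where the estimate becomes $2^{n-m+1}\cdot n$, and $n-m+1$ may exceed $n/5$. Here we sharpen using the exact valuation $\nu_2(n!/m!)=(n-m)-(s_2(n)-s_2(m))$, where $s_2$ is the binary digit sum. Adding the extra $1$ from $d$, we need $2^{\nu_2}\cdot 2\leq n\cdot 2^{n/5}$ up to strictness. We have $2^{n-m+1-s_2(n)+s_2(m)}\leq 2^{n-m+\log_2 n+1}$, but this still needs $n-m+1\leq n/5$. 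So I expect this step to be the main obstacle. If the pure arithmetic bound fails, we would bring in the structural information. By Lemma~\ref{l:CGK}, $\ov{G}\supseteq A_m$ acts faithfully enough on $K$ that some chief factor $V$ of $G$ inside $K$, an elementary abelian $r$-group of rank $d$ (or a nonabelian one), has $A_m$ as a section of $\operatorname{Out}$ or of $\operatorname{GL}_d(r)$. Lemma~\ref{l:sectionAm} then forces $d\geq m-2$, which contradicts $|K|$ being tiny. Thus the intended route in the hard case is to show that large $r$-parts can only come from such modules, which are excluded.

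Concretely, the order of steps is as follows. First bound $|K|$ via $|G|=|V_n|$ and $m>4n/5$. Second, apply Legendre's formula to get $|R|\leq 2\,n\,r^{(n-m)/(r-1)}$. Third, conclude directly for $r\geq3$, where $(n-m)/(r-1)<n/10$ easily absorbs the factor $2$ for $n\geq15$. Fourth, treat $r=2$ with the digit-sum refinement, using that $n-m\leq n-p$ and that the factor $d=2$ occurs only for $V_n=S_n$. There $n,n+1$ are not prime, which pushes $p$ slightly further from the worst case. If needed, fall back on the $\operatorname{GL}_d(2)$ argument above.
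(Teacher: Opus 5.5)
Your treatment of odd $r$ is correct. You are also right that $|K|$ need only divide $2\,n!/m!$: when $V_n=S_n$ and $\ov{G}=A_m$, the $2$-part of $|K|$ is $2\,(n!/m!)_2$. The paper's proof writes $|R|=(n!/m!)_r$ and passes over this. The gap is the case $r=2$, which you explicitly leave open. Your estimate $|R|\le n\cdot 2^{\,n-m+1}$ gives the lemma only when $n-m+1\le n/5$, whereas Lemma~\ref{l:Dusart} gives only $n-m<n/5$. The remark that $n$ and $n+1$ are composite does not help, since it only gives $p<n$.

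The proposed fallback does not prove the lemma either. The argument through Lemma~\ref{l:CGK} and Lemma~\ref{l:sectionAm} is the closing step of the theorem, and that step takes the present bound as its input. Invoking it here is therefore circular; at best it bypasses the lemma rather than proving it. The sentence ``large $r$-parts can only come from such modules'' is not an argument. Once $|K|$ is known to divide $2\,n!/m!$, the statement is purely arithmetic.

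What is missing is one integer observation, which you lost by replacing $[(n-m)/r^i]$ with $(n-m)/r^i$. Put $T=n-m\geq1$ (Lemma~\ref{l:mlesen}) and $s=[\log_2 n]$. Since $\sum_{i\geq1}[T/2^i]=T-s_2(T)\leq T-1$, with $s_2$ the binary digit sum as in your notation, we get
$$
\nu_2\Bigl(\frac{n!}{m!}\Bigr)=\sum_{i=1}^{s}\Bigl(\Bigl[\frac{n}{2^i}\Bigr]-\Bigl[\frac{m}{2^i}\Bigr]\Bigr)\leq\sum_{i=1}^{s}\Bigl(\Bigl[\frac{T}{2^i}\Bigr]+1\Bigr)\leq s+T-1.
$$
Hence $|R|\leq 2\cdot 2^{s+T-1}\leq n\cdot 2^{\,n-m}<n\cdot 2^{n/5}$. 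The spare $1$ is exactly what absorbs the factor $2$.

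In your digit-sum formulation the same point reads $s_2(m)-s_2(n)=c-s_2(n-m)\leq[\log_2 n]-1$. Here $c\leq[\log_2 n]$ is the number of carries when adding $m$ and $n-m$ in base $2$, by Kummer's theorem. Your bound $s_2(m)-s_2(n)\leq\log_2 n$ loses precisely this $1$. The paper's chain also keeps the integer parts $[t/r^i]$ with $t=n/5$, but it never needs the spare $1$ because it takes $|R|=(n!/m!)_r$.
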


\begin{proof} Let $t=n/5$ and $s=[\log_rn]$. In view of Lemma~\ref{l:Dusart}, we have $n-m\leq t$.  It follows from Legendre's formula that
$$
|R|=\left(\frac{n!}{m!}\right)_r=r^{\sum_{i=1}^s([n/r^i]-[m/r^i])}\leq r^{s+\sum_{i=1}^s[t/r^i]}\leq n\cdot r^{\sum_{i=1}^s[t/r^i]}<n\cdot r^{t/(r-1)}\leq n\cdot r^{t}.
$$
\end{proof}

We are ready to finish the proof of the theorem. Lemma~\ref{l:CGK} implies that there is a prime divisor $r$ of the order of~$|K|$ such that $N_G(R)/C_G(R)$ includes a section isomorphic to $A_m$, where $R$ is a Sylow $r$-subgroup of $K$. Now Lemma~\ref{l:sectionAm} yields $|R|\geq r^{m-2}$. Since $n\geq15$, it follows from Lemma~\ref{l:Dusart} that $m\geq4n/5$. Applying Lemma~\ref{l:RinK}, we obtain
$n\cdot r^{n/5}>r^{4n/5-2}$, so $n>r^{(3n-10)/5}$. This gives us a contradiction for $n\geq15$. The theorem is proved.

\medskip

\textbf{Acknowledgments.} The idea of this article appeared when the authors visited the Sino-Russian Mathematics Center of the Peking University. The authors express their deep gratitude to the director of the SRMC Professor Zhang Jiping for the invitation, warm hospitality and fruitful discussions. The research was carried out within the framework of the Sobolev Institute of Mathematics project FWNF-2026-0017.

\end{document}